\newtheorem{thm}{Theorem}
\newtheorem*{thm*}{Theorem}
\newtheorem{lem}{Lemma}
\newtheorem{cor}[thm]{Corollary}
\theoremstyle{definition}
\newtheorem{defn}{Definition}
\theoremstyle{remark}
\DeclareMathOperator{\cat}{cat}
\newcommand{\dsum}{\mathop{\displaystyle \sum}\limits}
\begin{document}

\title{Topological transversals to a family of convex sets}

\author{L.~Montejano}

\thanks{The research of Luis~Montejano is supported by CONACYT, 41340}

\email{luis@matem.unam.mx}
\address{Luis~Montejano, Instituto de Matem\'{a}ticas Universidad Nacion\'{a}l Aut\'{o}noma de M\'{e}%
xico, M\'{e}xico D.F. M\'{e}xico, Fax: (52)5556160348}

\author{R.N.~Karasev}

\thanks{The research of Roman~Karasev is supported by the Dynasty Foundation,
the President's of Russian Federation grant MK-113.2010.1, the Russian Foundation for Basic Research grants 10-01-00096 and 10-01-00139}

\email{r\_n\_karasev@mail.ru}
\address{
Roman Karasev, Dept. of Mathematics, Moscow Institute of Physics
and Technology, Institutskiy per. 9, Dolgoprudny, Russia 141700}

\begin{abstract}
Let $\mathcal F$ be a family of compact convex sets in $\mathbb R^d$. We say that $\mathcal F $ has
a \emph{topological $\rho$-transversal of index $(m,k)$} ($\rho<m$, $0<k\leq d-m$) if there are, homologically, as many transversal $m$-planes to $\mathcal F$ as $m$-planes containing a fixed $\rho$-plane in $\mathbb R^{m+k}$.

Clearly, if $\mathcal F$ has a $\rho$-transversal plane, then $\mathcal F$ has a topological $\rho$-transversal of index $(m,k),$ for $\rho<m$ and $k\leq d-m$. The converse is not true in general. 

We prove that for a family $\mathcal F$ of $\rho+k+1$ compact convex sets in $\mathbb R^d$ a topological $\rho$-transversal of index $(m,k)$ implies an ordinary $\rho$-transversal. We use this result, together with the multiplication formulas for Schubert cocycles, the Lusternik-Schnirelmann category of the Grassmannian, and different versions of the colorful Helly theorem by B\'ar\'any and Lov\'asz, to obtain some geometric consequences.
\end{abstract}

\subjclass[2000]{Primary 52A35,52C35; Secondary 14N15, 55M30, 55R25, 57R45}
\keywords{common transversal, the Helly theorem, the Schubert calculus}

\maketitle

\section{Introduction}

Let us make some definitions. By $M(d,m)$ we denote the space of $m$-planes (by \emph{plane} we mean an affine plane) in $\mathbb R^d$. It can be considered as an open subset of the Grassmannian $G(d+1,m+1)$ (see the details in Section~\ref{trans-def-sec}), and is retractible to the Grassmannian $G(d,m)$ of $m$-dimensional linear subspaces of $\mathbb R^d$. 

\begin{defn}
Let $\mathcal F$ be a family of compact convex sets in $\mathbb R^d$. For $0<m<d$ denote by $\mathcal T_m(\mathcal F)$ the subspace of $M(d,m)$ consisting of all $m$-planes transversal to $\mathcal F$, i.e. intersecting every member of $\mathcal F$. A member of $\mathcal T_m(\mathcal F)$ is called an \emph{$m$-transversal} to $\mathcal F$.
\end{defn}

Informally, we shall say that $\mathcal F $ has a \emph{topological $\rho$-transversal of index $(m,k)$} for $\rho<m$, $0<k\leq d-m$, if there are, homologically, as many transversal $m$-planes to $\mathcal F$ as $m$-planes containing a fixed $\rho$-plane in $R^{m+k}$. The formal definition is as follows.

\begin{defn}
For $\rho<m$, $0<k\leq d-m$ the family $\mathcal F$ has a \emph{topological $\rho $-transversal of index $(m,k)$} if the Schubert cocycle $[\underbrace{0,\ldots,0}_{\rho+1},k,\ldots,k]$ is not zero on $\mathcal T_m(\mathcal F)$ (see Sections~\ref{schubert-sec} and \ref{trans-def-sec} for explanations).
\end{defn}

Clearly, if $\mathcal{F}$ has a $\rho$-transversal plane, then $\mathcal F$ has a topological $\rho$-transversal of index $(m,k)$, if $\rho<m$ and $k\leq d-m$. The converse is not true in general.

Still, if the family $\mathcal F$ has limited size, we claim the following.

\begin{thm}
\label{top-trans}
Let $\mathcal F$ be a family of $\rho+k+1$ compact convex sets in $\mathbb R^d$. If $\mathcal F$ has a topological $\rho$-transversal of index $(m,k)$, then it has an ordinary $\rho$-transversal. 
\end{thm}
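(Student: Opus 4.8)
The plan is to prove the contrapositive. Write $\mathcal F=\{C_0,\dots,C_{\rho+k}\}$, assume $\mathcal F$ has no ordinary $\rho$-transversal, and show that the Schubert cocycle $\sigma=[0,\dots,0,k,\dots,k]$ (with $\rho+1$ entries $0$ and $m-\rho$ entries $k$) restricts to zero on $\mathcal T_m(\mathcal F)$, contradicting the hypothesis. The basic device is a continuous selection. For $L\in\mathcal T_m(\mathcal F)$ each slice $C_i\cap L$ is a non-empty compact convex subset of $L$, and the map $L\mapsto C_i\cap L$ is continuous (in the Hausdorff metric) on $\mathcal T_m(C_i)$, so the Steiner point $p_i(L)\in C_i\cap L$ depends continuously on $L$ throughout $\mathcal T_m(\mathcal F)$. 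The key observation is that $\operatorname{aff}\{p_0(L),\dots,p_{\rho+k}(L)\}$ is a flat lying inside $L$ and meeting every member of $\mathcal F$; if it had dimension $\le\rho$ then $\mathcal F$ would have a $\rho$-transversal, so this affine hull has dimension $\ge\rho+1$ for every $L\in\mathcal T_m(\mathcal F)$.

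I then reformulate this as a nondegeneracy statement about a bundle map. Let $\gamma$ be the $m$-dimensional bundle over $M(d,m)$ whose fibre over $L$ is the space of directions of $L$; under the retraction $M(d,m)\simeq G(d,m)$ this is the tautological bundle, and its characteristic classes are the special Schubert cocycles. The assignment $L\mapsto\bigl(p_1(L)-p_0(L),\dots,p_{\rho+k}(L)-p_0(L)\bigr)$ is a homomorphism $\phi\colon\underline{\mathbb{R}^{\rho+k}}\to\gamma$ over $\mathcal T_m(\mathcal F)$, and by the previous paragraph $\operatorname{rk}\phi_L\ge\rho+1$ for all $L$. Hence the induced map of $\mathcal T_m(\mathcal F)$ into the total space of $\operatorname{Hom}(\underline{\mathbb{R}^{\rho+k}},\gamma)$ over $G(d,m)$ avoids the degeneracy locus $D_\rho=\{\operatorname{rk}\le\rho\}$. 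The partition $[0,\dots,0,k,\dots,k]$ is tailored precisely so that the Giambelli/Thom--Porteous determinant for $D_\rho$ is the pull-back of $\sigma$: in the universal model $D_\rho$ has the expected codimension $(\rho+k-\rho)(m-\rho)=(m-\rho)k=|\sigma|$, and its Thom--Porteous class is $\det\bigl(c_{m-\rho+j-i}(\gamma)\bigr)_{1\le i,j\le k}$, which equals $\sigma$ after the standard tautological/quotient (and affine/linear Grassmannian) identifications. Being Poincar\'e dual to the fundamental class of $D_\rho$, this class lies in the image of cohomology with supports in $D_\rho$, hence restricts to zero on the complement of $D_\rho$; pulling back along $\phi$ gives $\sigma|_{\mathcal T_m(\mathcal F)}=0$, the desired contradiction, so $\mathcal F$ has a $\rho$-transversal. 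One sees here why the cardinality $\rho+k+1$ is exactly right: with only $\rho+k$ sets the centroids span $\ge\rho+1$ dimensions out of $\rho+k-1$ difference vectors, which kills the smaller cocycle $[0,\dots,0,k-1,\dots,k-1]$ rather than $\sigma$.

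I expect the genuine work to be at the Schubert-calculus step --- matching $\det\bigl(c_{m-\rho+j-i}(\gamma)\bigr)_{1\le i,j\le k}$ with the cocycle $[0,\dots,0,k,\dots,k]$ while keeping track of the duality between the tautological and quotient bundles and of the passage between $M(d,m)$ and $G(d,m)$ (this is exactly the computation that dictates the partition in the definition of a topological $\rho$-transversal of index $(m,k)$) --- together with making airtight the assertion that the degeneracy-locus class dies off $D_\rho$, for which it suffices that in the universal situation $D_\rho$ has the expected codimension, so that Thom--Porteous genuinely computes the dual of its fundamental class. The auxiliary facts --- continuity of $L\mapsto C_i\cap L$ and of the Steiner point on $\mathcal T_m(C_i)$, and the compatibility of $\gamma$ with $M(d,m)\simeq G(d,m)$ --- are routine, and I would treat them briefly.
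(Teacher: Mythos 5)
Your strategy is essentially the paper's own: pick a continuous selection $p_i(L)\in L\cap C_i$ over $\mathcal T_m(\mathcal F)$, package it as a fiberwise map into the tautological bundle, and use the hypothesis that the Schubert cocycle is nonzero to force the rank-degeneracy locus (equivalently, affine span of the selected points of dimension $\le\rho$) to be nonempty. The paper does this in the lifted picture: it takes the $\rho+k+1$ points themselves as $l=\rho+k+1$ sections of the $(m+1)$-dimensional tautological bundle over $G(d+1,m+1)$ and identifies the rank-$\le(\rho+1)$ degeneracy class directly with the Schubert cocycle $[\underbrace{0,\ldots,0}_{\rho+1},k,\ldots,k]$ (its degeneracy locus over the Grassmannian is literally a Schubert variety), so no Giambelli determinant is needed; your affine variant with the $\rho+k$ difference vectors, the Thom--Porteous class $\det\bigl(w_{m-\rho+j-i}(\gamma)\bigr)_{1\le i,j\le k}=[\underbrace{0,\ldots,0}_{\rho},k,\ldots,k]$ on $G(d,m)$, and the ``supported on $D_\rho$, hence zero on its complement'' argument in the universal model is a correct equivalent bookkeeping (indeed it makes explicit the point the paper only gestures at, namely why non-genericity of the specific sections is harmless).

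The genuine gap is the continuity claim on which the whole construction rests: it is \emph{not} true that $L\mapsto C_i\cap L$ is continuous in the Hausdorff metric on $\mathcal T_m(C_i)$ for arbitrary compact convex $C_i$. The map is only upper semicontinuous: if $C_i$ is a square in the plane and $L$ is the line containing one of its edges, an arbitrarily small rotation of $L$ about an endpoint of that edge keeps $L$ in $\mathcal T_1(C_i)$ while the intersection collapses from the whole edge to a single vertex; the Steiner point then jumps, so your selection $p_i$ is not continuous and the bundle map $\phi$ is not defined. This is exactly why the paper first assumes the $C_i$ strictly convex with nonempty interior (where continuity of $L\mapsto C_i\cap L$ does hold) and then removes this assumption by an approximation argument: replace each $C_i$ by a strictly convex body $C_i^\varepsilon\supseteq C_i$, note that $\mathcal T_m(\mathcal F)\subseteq\mathcal T_m(\mathcal F^\varepsilon)$ so the cocycle is still nonzero on the larger space, obtain a $\rho$-transversal to each approximating family, and extract a limiting $\rho$-plane as $\varepsilon\to 0$ using compactness. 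With that approximation-and-limit step inserted your proof goes through; as written, the very first construction fails already for polytopes, so this point cannot be dismissed as routine.
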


In the case $k=1$ the following stronger version of Theorem~\ref{top-trans} is true.

\begin{thm}
\label{top-trans-k1}
Let $0\leq\rho<m\leq d-1$. Let 
$$
\mathcal F = \{A_1,\ldots, A_{\rho+2}\}
$$ 
be a family of $\rho+2$ convex sets in $\mathbb R^d$, and let 
$$
\alpha_i\in A_i\quad i=1,\ldots,\rho+2
$$ 
be some points. Suppose there is not a $\rho$-transversal to $\mathcal F$. Then the inclusion
$$
\mathcal T_m(\{\alpha_1,\ldots,\alpha_{\rho+2}\})\subset \mathcal T_m(\mathcal F)
$$
is a homotopy equivalence. In particular, $\mathcal T_m(\mathcal F)$ has the homotopy type of $G(d-\rho-1,m-\rho-1)$, and in the case $m=\rho+1$ the set $\mathcal T_m(\mathcal F)$ is contractible.
\end{thm}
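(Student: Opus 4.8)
First I would unwind what ``no $\rho$-transversal'' forces on the $\alpha_i$. If no $\rho$-flat meets all of $A_1,\ldots,A_{\rho+2}$, then in particular no $\rho$-flat contains all the points $\alpha_i$, so $\mathrm{aff}(\alpha_1,\ldots,\alpha_{\rho+2})$ cannot have dimension $\le\rho$; hence the $\alpha_i$ are affinely independent and $P:=\mathrm{aff}(\alpha_1,\ldots,\alpha_{\rho+2})$ is a $(\rho+1)$-flat. Consequently $\mathcal T_m(\{\alpha_1,\ldots,\alpha_{\rho+2}\})$ is exactly the set of $m$-flats containing $P$; writing $\vec X$ for the linear direction of a flat $X$, such an $m$-flat is determined by its direction $\vec L\supseteq\vec P$, so this space is canonically homeomorphic to $\{V\in G(d,m):V\supseteq\vec P\}\cong G(d-\rho-1,m-\rho-1)$, which is a point when $m=\rho+1$. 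Thus the last two assertions follow once the inclusion is shown to be a homotopy equivalence. I would also record the rigidity that ``no $\rho$-transversal'' provides: for every $L\in\mathcal T_m(\mathcal F)$ and every choice $x_i\in L\cap A_i$, the points $x_1,\ldots,x_{\rho+2}$ are affinely independent, since otherwise their affine hull would be a $\rho$-transversal to $\mathcal F$. This persists along the contraction $\mathcal F_t:=\{\,(1-t)A_i+t\{\alpha_i\}\,\}_{i=1}^{\rho+2}$, $t\in[0,1]$: a $\rho$-transversal to $\mathcal F_t$ meets the smaller sets $(1-t)A_i+t\{\alpha_i\}\subseteq A_i$ and hence would be one to $\mathcal F$, so none exists, and since the slices shrink with $t$ we also have $\mathcal T_m(\mathcal F_t)\subseteq\mathcal T_m(\mathcal F)$.

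The plan is to replace $\mathcal T_m(\mathcal F)$ by the tautological space of transversals with marked intersection points,
\[
\widetilde{\mathcal T}:=\bigl\{\,(L,x_1,\ldots,x_{\rho+2})\ :\ L\in\mathcal T_m(\mathcal F),\ x_i\in L\cap A_i\ \ (i=1,\ldots,\rho+2)\,\bigr\},
\]
together with the forgetful projection $p\colon\widetilde{\mathcal T}\to\mathcal T_m(\mathcal F)$ (I assume the $A_i$ compact; the general case is analogous). Then $\widetilde{\mathcal T}$ and $\mathcal T_m(\mathcal F)$ are compact, $p$ is proper, and the fibre of $p$ over $L$ is $\prod_i (L\cap A_i)$, a product of nonempty compact convex sets, hence cell-like; since the spaces involved are ANRs (locally cut out by convex conditions), $p$ is a homotopy equivalence. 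Let $\widetilde{\mathcal T}_1:=\{\,(L,\alpha_1,\ldots,\alpha_{\rho+2}): \alpha_i\in L\ \forall i\,\}=\{\,(L,\alpha_1,\ldots,\alpha_{\rho+2}): L\supseteq P\,\}$; forgetting the constant second coordinate, $p$ restricts to a homeomorphism of $\widetilde{\mathcal T}_1$ onto $\mathcal T_m(\{\alpha_1,\ldots,\alpha_{\rho+2}\})$.

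I would then build a strong deformation retraction of $\widetilde{\mathcal T}$ onto $\widetilde{\mathcal T}_1$ by following the contraction $\mathcal F_t$. For $(L,\vec x)\in\widetilde{\mathcal T}$ and $t\in[0,1]$ set $x_i(t):=(1-t)x_i+t\alpha_i$, a point of $(1-t)A_i+t\{\alpha_i\}$; by the rigidity applied to $\mathcal F_t$ these points are affinely independent, so they span a $(\rho+1)$-flat $Q_t$ depending continuously on $(\vec x,t)$, with $Q_0\subseteq L$ and $Q_1=P$. It remains to drag $L$ along $Q_t$: I would choose, continuously in $((L,\vec x),t)$, an $(m-\rho-1)$-dimensional subspace $U(t)$ complementary to $\vec Q_t$ with $U(0)=\vec L\cap\vec Q_0^{\,\perp}$, and put $L(t):=Q_t+U(t)$, an $m$-flat containing $Q_t$. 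Such a choice exists because the complements of $\vec Q_t$ form a locally trivial fibre bundle over $\widetilde{\mathcal T}\times[0,1]$ that is a product in the $[0,1]$-direction, so the section defined over $\widetilde{\mathcal T}\times\{0\}$ extends (relative to $\widetilde{\mathcal T}_1$, where $\vec Q_t\equiv\vec P$). Since $x_i(t)\in Q_t\subseteq L(t)$ and $x_i(t)\in A_i$, the pair $(L(t),\vec x(t))$ lies in $\widetilde{\mathcal T}$ (indeed $L(t)$ is transversal to $\mathcal F_t$, a fortiori to $\mathcal F$); at $t=1$ one gets $L(1)\supseteq Q_1=P$, so we land in $\widetilde{\mathcal T}_1$; and on $\widetilde{\mathcal T}_1$ the homotopy is stationary because there $Q_t\equiv P$, $U(t)\equiv\vec L\cap\vec P^{\,\perp}$, and $Q_t+U(t)=L$.

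Finally, consider the commutative square whose horizontal arrows are the inclusions $\widetilde{\mathcal T}_1\hookrightarrow\widetilde{\mathcal T}$ and $\mathcal T_m(\{\alpha_1,\ldots,\alpha_{\rho+2}\})\hookrightarrow\mathcal T_m(\mathcal F)$ and whose vertical arrows are $p$. Three of these four maps are homotopy equivalences — the top inclusion (a deformation retract by the previous paragraph) and the two copies of $p$, the left one even a homeomorphism — hence so is the fourth, which is the inclusion in the statement; the identifications of the first paragraph then give the homotopy type $G(d-\rho-1,m-\rho-1)$, contractible when $m=\rho+1$. I expect the genuine obstacle to be the continuity package in the third paragraph: checking that $Q_t$, and above all the complementary subspace $U(t)$, can be chosen continuously and globally in $(L,\vec x)$ (not just fibrewise) so that $((L,\vec x),t)\mapsto(L(t),\vec x(t))$ is an honest deformation retraction with image in $\widetilde{\mathcal T}$. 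A secondary technical point is the ANR / cell-like-map input needed to know that $p$ is a homotopy equivalence; the rest is soft.
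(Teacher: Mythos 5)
Your argument is correct in substance and shares the paper's backbone, but the second half runs differently. Like the paper, you pass to the marked space (your $\widetilde{\mathcal T}$ is exactly the paper's $\widehat{\mathcal T}_m(\mathcal F)$), you use that the forgetful projection to $\mathcal T_m(\mathcal F)$ is a homotopy equivalence because its fibers $\prod_i (L\cap A_i)$ are contractible (the paper asserts precisely this for $\pi_1$, with the same implicit cell-like/ANR caveat you flag, so you are no less rigorous there), and you use the same key geometric fact that the absence of a $\rho$-transversal forces any choice of $x_i\in L\cap A_i$ to be affinely independent, hence to span a unique $(\rho+1)$-flat. Where you diverge: the paper looks at the second projection $\pi_2\colon \widehat{\mathcal T}_m(\mathcal F)\to A_1\times\dots\times A_{\rho+2}$, observes it is a fiber bundle with fiber $G(d-\rho-1,m-\rho-1)$, and, since the base is contractible, concludes at once that the total space has the homotopy type of the fiber and that the inclusion of the fiber over $(\alpha_1,\ldots,\alpha_{\rho+2})$ (which $\pi_1$ carries homeomorphically onto $\mathcal T_m(\{\alpha_1,\ldots,\alpha_{\rho+2}\})$) is a homotopy equivalence. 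You instead build the deformation by hand, moving the marked points linearly to the $\alpha_i$ and transporting the complementary $(m-\rho-1)$-dimensional direction of $L$ along the moving flat $Q_t$. This does work: the bundle of admissible complements pulled back to $\widetilde{\mathcal T}\times[0,1]$ is a fiber bundle over a paracompact base, so the section given at $t=0$ extends by the bundle homotopy (covering homotopy) theorem. Moreover, the stationarity issue you single out as the main obstacle is not actually needed: for $(L,\vec\alpha)\in\widetilde{\mathcal T}_1$ one has $Q_t\equiv P$, so whatever complement the extended section chooses, $L(t)=P+U(t)$ still contains $P$ and the homotopy maps $\widetilde{\mathcal T}_1$ into itself at every time and all of $\widetilde{\mathcal T}$ into $\widetilde{\mathcal T}_1$ at $t=1$; a homotopy $H$ with $H_0=\mathrm{id}$, $H_1(X)\subseteq A$ and $H_t(A)\subseteq A$ already makes $A\hookrightarrow X$ a homotopy equivalence (with inverse $H_1$), so no relative cofibration input is required. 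In sum: the paper's route is shorter once the local triviality of $\pi_2$ is granted and yields the homotopy type of $\mathcal T_m(\mathcal F)$ directly from the bundle over a contractible base; your route avoids invoking that bundle structure at the cost of the section-extension argument, and recovers the homotopy type from the identification of $\mathcal T_m(\{\alpha_1,\ldots,\alpha_{\rho+2}\})$ with the $m$-flats containing $P$, i.e.\ with $G(d-\rho-1,m-\rho-1)$, exactly as in the statement.
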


We use these theorems, together with the multiplication formulas for Schubert cocycles, the Lusternik-Schnirelmann category of the Grassmannian, and different versions of the colorful Helly theorem by B\'ar\'any and Lov\'asz, to obtain some geometric consequences in Sections~\ref{col-helly-trans-sec}, \ref{linear-sec}, \ref{col-helly-semi-sec}, \ref{complex-sec}.

Note that a simple fact on the cohomology of Grassmannians, in the Schubert notation 
$$
[\underbrace{1,\ldots,1}_m]^{d-m}=[\underbrace{d-m,\ldots, d-m}_m]\in H^*(G(d,m), Z_2),
$$
has already given useful geometric applications to transversal planes in~\cite{zivvre1990,dol1993,ziv2004}. Several results on transversals, similar to the results of this paper, can be found in~\cite{abmos2002,bm2002,bmo2002,kar2009}. In~\cite{m2010} Theorem~\ref{top-trans} was conjectured and verified in some low-dimensional cases.

\section{Schubert cycles and cocycles}
\label{schubert-sec}

In this paper we use \v Cech homology and cohomology groups with $Z_2$ coefficients, and omit the coefficients in the notation.

Let $G(d,m)$ be the Grassmannian $m(d-m)$-manifold of all $m$-planes through the origin in $\mathbb R^d$. Our main technical tool in this paper is the Schubert calculus. Although we summarize in this section what we need, good references for the homology and cohomology of Grassmannian manifolds are~\cite{ms1974,pont1950,ch1948}.

From now on let $\lambda_1,\ldots,\lambda_m$ be a sequence of integers such that 
$$
0\leq \lambda_1\leq \dots\leq \lambda_m\leq d-m
$$.

\begin{defn}
Denote the following subset of $G(d,m)$
$$
\{\lambda_1,\ldots,\lambda_m\}=\{H\in G(d,m) : \forall j=1,\ldots, m,\ \dim (H\cap \mathbb{R}^{\lambda _{j}+j})\geq j\}.
$$
\end{defn}

For example
$$
\{H\in G(d,m) : \mathbb R^s\subset H\subset \mathbb R^{m+t}\},
$$ 
which is homeomorphic to $G(m+t-s,t-s)$, is also denoted by $\{\underbrace{0,\ldots,0}_s,t,\ldots,t\}$. 
Another example is
$$
\{\underbrace{t,\ldots,t}_{s},\underbrace{d-m\ldots, d-m}_{m-s}\}=\{H\in G(d,m) : \dim (H\cap \mathbb{R}^{t+s})\geq s\}.
$$

It is known that $\{\lambda_1,\ldots,\lambda_m\}$ is a compact subset of $G(d,m)$ of dimension $\lambda=\lambda_1+\dots+\lambda_m,$ which is a closed connected $\lambda$-manifold except possibly for a closed connected subset of codimension at least three. Thus 
$$
H^\lambda(\{\lambda_1,\ldots,\lambda_m\})=Z_2=H_\lambda(\{\lambda_1,\ldots,\lambda_m\}).
$$ 
In fact, $G(d,m)$ has a CW-complex structure in which the open $\lambda$-cells are the following subsets: 
$$ 
\{H\in G(d,m) : \dim (H\cap \mathbb R^{\lambda_j+j})=j, \dim(H\cap \mathbb R^{\lambda_{j}+j-1})=j-1\}.
$$
Thus $\{\lambda_1,\ldots,\lambda_m\}$ is a subcomplex of $G(d,m)$ and if $0\leq \xi_1\leq \dots \leq \xi_m\leq d-m$ and $\{\xi_1,\ldots,\xi_m\}\leq \{\lambda_1,\ldots,\lambda_m\}$ (component-wise) then $\{\xi_1,\ldots,\xi_m\}$ is a subcomplex of $\{\lambda_1,\ldots,\lambda_m\}$. 

\begin{defn}
Let 
$$
(\lambda_1,\ldots,\lambda_m)\in H_\lambda(G(d,m))
$$ 
be the $\lambda$-cycle, which is induced by the inclusion $\{\lambda_1,\ldots,\lambda_m\}\subset G(d,m)$. These cycles are called \emph{Schubert cycles}.
\end{defn}

A canonical basis for $H_\lambda(G(d,m))$ consists of all Schubert cycles $(\xi_1,\ldots,\xi_m)$ such that $0\leq \xi_1\leq\dots\leq \xi_m\leq d-m$ and $\lambda =\xi_1+\dots+\xi_m$.

\begin{defn}
Let us denote by 
$$
[\lambda_1,\dots,\lambda_m]\in H^\lambda(G(d,m))
$$ 
the $\lambda$-cocycle whose value is one for $(\lambda_1,\dots,\lambda_m)$ and zero for any other
Schubert cycle of dimension $\lambda$. This is a \emph{Schubert cocycle}.
\end{defn}

Thus, a canonical basis for $H^\lambda(G(d,m))$ consists of all Schubert cocycles $[\xi_1,\ldots,\xi_m]$ such that $0\leq \xi_1\leq\dots\leq \xi_m\leq d-m$ and $\lambda =\xi_1+\dots+\xi_m$. If 
$$
j:\{\lambda_1,\ldots,\lambda_m\}\rightarrow G(d,m)
$$ 
is the natural inclusion, then $j^*([\xi_1,\ldots,\xi_m])$ is not zero if and only if 
$$
[\xi_1,\ldots,\xi_m]\leq [\lambda_1,\ldots,\lambda_m],
$$ 
i.e. $\xi_i\leq \lambda_i$ for all $i=1,\ldots,m$. The cohomology classes 
$$
w_i = [\underbrace{0,\ldots,0}_{m-i},\underbrace{1,\ldots,1}_i]
$$ 
are the classical \emph{Stiefel-Whitney characteristic classes} of the standard vector bundle over $G(d,m)$. Similarly, the classes
$$
\bar w_i = [\underbrace{0,\ldots,0}_{m-1},i]
$$ 
are called \emph{the dual Stiefel-Whitney characteristic classes}.

The isomorphism 
$$
D: H_\lambda(G(d,m))\rightarrow H^{m(d-m)-\lambda }(G(d,m))
$$ 
given by 
$$
D((\lambda_1,\ldots,\lambda_m))=[d-m-\lambda_m,\ldots,d-m-\lambda_1]
$$ 
is the classical \emph{Poincar\'{e} duality isomorphism}.

By the above, if $X\subset G(d,m)$ is such that $X\cap \{\lambda_1,\ldots,\lambda_m\}=\emptyset$ and $i_X:X\rightarrow G(d,m)$ is the inclusion, then 
\begin{equation*}
i_X^*(D((\lambda_1,\ldots,\lambda_m)))=i_X^*([d-m-\lambda_m,\ldots,d-m-\lambda_1])=0.
\end{equation*}

\section{The spaces of planes and transversals}
\label{trans-def-sec}

We need to make precise definitions on the space of plane transversals. 

Let $M(d,m)$ be the set of all (affine) $m$-planes in $\mathbb R^d$, in particular, $G(d,m)\subset M(d,m)$. We regard $M(d,m)$ as an open subset of $G(d+1,m+1)$, making the following identifications.

Let $z_0\in \mathbb R^{d+1}-\mathbb R^d$ be some point and, without loss of generality, let $G(d+1,m+1)$ be the space of all $(m+1)$-planes in $\mathbb R^{d+1}$ through $z_0$. Let us identify $H\in M(d,m)$ with the unique $(m+1)$-plane $H'\in G(d+1,m+1)$ which contains $H$ and passes through $z_0$. Thus we have
$$
G(d,m)\subset M(d,m)\subset G(d+1,m+1),
$$
where $M(d,m)$ is an open subset of $G(d+1,m+1)$ and $G(d,m)\subset G(d+1,m+1)$ may be regarded as $\{0,d-m,\ldots,d-m\}$, the set of all $(m+1)$-planes in $\mathbb R^{d+1}$ that contain $\mathbb R^1$. In other words, if $j:G(d,m)\rightarrow G(d+1,m+1)$ is the natural inclusion, then $j(\{\lambda_1,\ldots,\lambda_m\})=\{0,\lambda_1,\ldots,\lambda_m\}$. For example, if $0\leq k\leq d-m$, then $\{0,k,\ldots,k\}$ as a subset of $M(d,m)$ is the set of all $m$-planes $H$ through the origin in $\mathbb R^d$ with the property that $H\subset \mathbb R^{m+k}$.

\begin{defn}
Let $A$ be a subset of a topological space $X$, $i:A\rightarrow X$ be the inclusion, and let $\gamma \in H^*(X)$. We say that $\gamma$ \emph{is zero or not zero} on $A$, provided $i^*(\gamma)$ is zero or not zero, respectively, in $H^*(A)$. We write in this case $\gamma|_A=0$ or $\gamma|_A\not=0$ respectively.
\end{defn}

Let us give the details of the definition of a topological transversal. If $\rho<m$ and $0<k\leq d-m$, we say that $[\underbrace{0,\ldots,0}_{\rho+1},k,\ldots,k]$ is not zero on $\mathcal T_m(\mathcal F)$ if
$$
i^*([\underbrace{0,\ldots,0}_{\rho+1},k,\ldots,k])\in H^{(m-\rho)k}(\mathcal T_m(\mathcal F))
$$ 
is not zero, where 
$$
i^*:H^{(m-\rho)k}(G(m+1,d+1))\rightarrow H^{(m-\rho)k}(\mathcal T_m(\mathcal F))
$$
is the cohomology homomorphism induced by the inclusion $\mathcal T_m(\mathcal F)\subset M(d,m)\subset G(d+1,m+1)$. From the definition of the Schubert cycles it is clear that, informally, $[\underbrace{0,\ldots,0}_{\rho+1},k,\ldots,k]$ is not zero on $\mathcal T_m(\mathcal F)$ iff there are homologically as many transversal $m$-planes to $\mathcal F$ as $m$-planes through a fixed $\rho$-plane in $\mathbb R^{m+k}$. 

From the Poincar\'e duality it follows that the topological $\rho$-transversal of index $(m,k)$ implies the following purely geometrical condition: for any affine plane $A$ of dimension $d-k-\rho-1$ (possibly at infinity) there exists an $m$-transversal $L$ to $\mathcal F$, such that $\dim L\cap A\ge m-\rho-1$. 

\section{Proof of Theorem~\ref{top-trans}}

First, let us define a certain characteristic class of a vector bundle. Consider a vector bundle $\eta :E(\eta) \to M$ of dimension $n$ over a compact smooth manifold without boundary. Let us define a characteristic class (in mod $2$ cohomology) of $\eta$ by the following construction. Let $s_1, \ldots, s_l$ be some sections of $\eta$, denote 
$$
z_{l, r} = \{x\in M: \dim\langle s_1(x), \ldots, s_l(x) \rangle \le r\},
$$
here $\langle \ldots \rangle$ denotes the linear span of vectors. It can be easily seen that the $n\times l$ matrices of rank $\le r$ form a submanifold (possibly, with singularities) of the space of all matrices. It follows from the Thom transversality theorem that $z_{l, r}$ is a submanifold (possibly, with singularities) of $M$ for generic sections $s_1,\ldots, s_l$. Let us define the characteristic class $c_{l, r} (\eta)$ as the Poincar\'e dual to $z_{l, r}$. The definition is correct, because the singularities have $\ge 2$ codimension and do not affect the mod $2$ homology and cohomology. The subspaces of rank $\le r$ matrices are widely used in studying the singularities of smooth maps, such matrices correspond to the Porteous-Thom singularities~\cite{port1971}.

Note that the class $c_{l, r}(\eta)$ is functorial. In order to express it in terms of the Schubert cocycles, let us take $M$ to be the Grassmannian $G(N, n)$ and $\eta$ to be its tautological bundle. Let the sections $s_i$ be given by projections of the respective vectors $v_i\in\mathbb R^N$ to the $n$-subspace $L\subset\mathbb R^N$. If the vectors $v_i$ are chosen to be linearly independent, the set $z_{l, r}$ is described as follows
$$
z_{l, r} = \{L\in G(N, n) : \dim L^\perp\cap V\ge l-r\},
$$
where $V$ is the linear hull of $v_1,\ldots, v_l$, or equivalently
$$
z_{l, r} = \{L\in G(N, n) : \dim L\cap V^\perp\ge n - r\}.
$$
Therefore 
$$
\{z_{l, r}\} = \{\underbrace{N-n-l+r,\ldots, N-n-l+r}_{n-r}, \underbrace{N-n,\ldots,N-n}_{r}\}
$$ 
by definition of the Schubert cycle, which is Poincar\'e dual to the Schubert cocycle $[\underbrace{0,\ldots,0}_r,\underbrace{l-r,\ldots, l-r}_{n-r}]$.

In fact all the above reasonings are standard in the singularity theory and can be restated as follows. We consider continuous fiberwise maps $f: \epsilon^l\to \eta$ over $M$, where $\epsilon$ is the trivial one-dimensional bundle. We define the class of singularities for such maps $f$, which is defined by the condition that the rank of the fiber map is $\le r$. Then we find the characteristic class of these singularities using the standard construction over the Grassmannian.

Now we are ready to prove the theorem. Denote $\mathcal T_m$ the set of $m$-transversals to the family 
$$ 
\mathcal F = \{C_1, \ldots, C_{\rho+k+1}\},
$$
it is a subset of $G(d+1, m+1)$, as defined above. Consider the tautological $m+1$-dimensional bundle $\gamma : E(\gamma)\to G(d+1, m+1)$, and take $l=\rho+k+1$ sections $s_i$ of this bundle over $\mathcal T_m$ by selecting continuously a point $s_i(L)\in L\cap C_i$ ($L$ is an $m+1$-dimensional linear space in $\mathcal T_m\subseteq G(d+1, m+1)$). The continuous selection is possible if all $C_i$'s are strictly convex and have nonempty interior (in this case the intersection $L\cap C$ depends continuously on $L$ in the Hausdorff metric), the other cases are reduced to this by $\varepsilon$-approximating $C_i$'s by ``good'' sets, going to the limit $\varepsilon\to 0$, and using the compactness.

Now it suffices to find an element $L\in\mathcal T_m$ such that the vectors $s_i(L)$ span a linear subspace of $L$ of dimension $\le r=\rho+1$. As it was shown in the beginning of the proof, this is guaranteed by the class
$$
c_{l, r}(\gamma|_{\mathcal T_m}) = [\underbrace{0,\ldots,0}_r,\underbrace{l-r,\ldots, l-r}_{m-\rho}]|_{\mathcal T_m} = [\underbrace{0,\ldots,0}_{\rho+1},\underbrace{k,\ldots, k}_{m-\rho}]|_{\mathcal T_m},
$$
which is nonzero by the definition of the topological $\rho$-transversal of index $(m,k)$.

\section{Proof of Theorem~\ref{top-trans-k1}}

Consider 
$$
\widehat{\mathcal T}_m(\mathcal F)=\{(H,a_1,\ldots,a_{\rho+2}) : H\in \mathcal T_m(\mathcal F),\ a_i\in H\cap A_i\},
$$ 
with the two natural projections
$$
\begin{array}{rcl}
& \widehat{\mathcal T}_m(\mathcal F) &  \\ 
\pi_1\swarrow &  & \searrow \pi_2 \\ 
\mathcal T_m(\mathcal F) &  & A_1\times\dots\times A_{\rho +2}.
\end{array}
$$
Observe that $\pi_1$ is a homotopy equivalence because the fiber 
$$
\pi_1^{-1}(H)=\prod_{i=1}^{\rho+2}(H\cap A_i)
$$ 
is contractible for every $H\in \mathcal T_m(\mathcal F)$. 

Suppose there is no $\rho$-transversal to $\mathcal F$. Then each collection of points $(a_1,\ldots,a_{\rho +2})$ with $a_i\in A_i$ determines a unique $(\rho+1)$-plane $L$ in $\mathbb R^d$. Then $\pi_2^{-1}(a_1,\ldots,a_{\rho+2})$ consists of the $m$-planes in $\mathbb R^d$ that contain $L$, which
is homeomorphic to $G(d-\rho-1,m-\rho-1)$. Moreover, it is easy to see that $\pi_2$ is a fiber bundle with fiber $G(d-\rho -1,m-\rho -1)$. Since its base is contractible , then $\widehat{\mathcal T}_m(\mathcal F)$ , and hence $\mathcal T_m(\mathcal F)$ has the homotopy type of $G(d-\rho-1,m-\rho-1)$. 

The inclusion 
$$
\{(H,\alpha_1,\ldots,\alpha_{\rho+2}) : H\in \mathcal T_m(\mathcal F),\ \alpha_i\in H\}\subset \widehat{\mathcal T}_m(\mathcal F)
$$ 
is a homotopy equivalence. Therefore the inclusion 
\begin{multline*}
\mathcal T_m(\{\alpha_1,\ldots,\alpha_{\rho+2}\})=\\
=\pi_1\{(H,\alpha_1,\ldots,\alpha_{\rho+2}) : H\in \mathcal T_m(\mathcal F),\ \alpha_i\in H\}\subset \mathcal T_m(\mathcal F)
\end{multline*}
is also a homotopy equivalence.

\section{Multiplication in the cohomology of $G(d,m)$}
\label{mult-sec}

In order to apply Theorems~\ref{top-trans} and \ref{top-trans-k1} in geometric situations, we need to remind some useful facts on the multiplication in the cohomology of the Grassmannian. The following is the Pieri formula for the multiplication by a dual Stiefel-Whitney class in the cohomology of the Grassmannian~\cite{ch1948,hil1980A}:
$$
[\lambda_1,\ldots,\lambda_m][0,...,0,k]=\dsum [\xi_1,\ldots,\xi_m],
$$
where the summation extends over all combinations $\xi_1,\ldots,\xi_m$ such that

\begin{enumerate}
\item 
$0\leq \xi_1\leq \ldots\leq \xi_m$;
\item 
$\lambda_j\leq \xi_j\leq \lambda_{j+1}$ for all $j$, where we put $\lambda_{m+1}=d-m$;
\item 
$\dsum_{j=1}^{m}\xi_j=k + \dsum_{j=1}^{m}\lambda_j$.
\end{enumerate}

This formula can be applied to the powers $w_1^n = [\underbrace{0,\ldots,0}_{m-1},1]^n$ of the first Stiefel-Whitney class, to give the following result from~\cite{hil1980A,hil1980B}. 

\begin{thm}
\label{sw1-height}
Let $2m\le d$ (if it is not, we consider $G(d, d-m)\sim G(d,m)$ instead and exchange $m$ and $d-m$), and let $2^s$ be the minimal power of two, satisfying $2^s\ge d$. Denote $w_1$ the first Stiefel-Whitney class of the Grassmannian $G(d, m)$.

1) If $m = 1$, then $w_1^{d-1}\not=0$ and $w_1^d=0$;

2) If $m = 2$, then $w_1^{2^s-2}\not=0$ and $w_1^{2^s-1}=0$;

3) If $m > 2$, then in the case $d=2m=2^s$ we have $w_1^{2^{s-1}}\not=0$; and $w_1^{2^s-2}\not=0$ in other cases.

In all cases $w_1^{d-m}\not=0$, and $w_1^{d-m+1}$ may be zero only for $m=1$, or $m=2$ and $d=2^s$.
\end{thm}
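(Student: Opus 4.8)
\emph{Proof proposal.} The plan is to read off $w_1^n$ from the Pieri formula of Section~\ref{mult-sec} and then reduce the whole statement to a combinatorial fact about standard Young tableaux; I may assume $2m\le d$ as in the statement. Since $w_1=[\underbrace{0,\ldots,0}_{m-1},1]=\bar w_1$, the Pieri rule with $k=1$ says that multiplying a Schubert cocycle $[\lambda_1,\ldots,\lambda_m]$ by $w_1$ is the sum over all ways of increasing a single $\lambda_j$ by $1$ while keeping $0\le\xi_1\le\cdots\le\xi_m\le d-m$, i.e.\ of adding one box to the associated Young diagram inside the $m\times(d-m)$ rectangle. Iterating from $[0,\ldots,0]=1$, the coefficient of $[\xi]$ in $w_1^n$ equals, mod $2$, the number of saturated chains $\emptyset=\lambda^{(0)}\subset\cdots\subset\lambda^{(n)}=\xi$ of diagrams inside the rectangle; since every subdiagram of a rectangle-diagram again fits in the rectangle, this count is simply $f^\xi$, the number of standard Young tableaux of shape $\xi$. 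So the first thing to record is
$$
w_1^n\neq0\ \text{in}\ H^*(G(d,m))\iff\text{there is }\xi\vdash n,\ \xi\subseteq m\times(d-m),\ \text{with }f^\xi\ \text{odd}.
$$
After this, the theorem is purely a question about parities of $f^\xi$, which I would attack through the hook-length formula together with Kummer's theorem on $2$-adic valuations of binomials (equivalently, through the classical description of the shapes with $f^\xi$ odd in terms of their $2$-core towers). I would also note that $\{n:w_1^n\neq0\}$ is an initial segment of $\mathbb Z_{\ge0}$ (if $w_1^a=0$ then $w_1^b=w_1^aw_1^{b-a}=0$), so for each claimed value it is enough to produce, or forbid, a single odd-$f^\xi$ shape of that size.

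First I would dispose of $m=1,2$. For $m=1$ the only shape is $\xi=(n)$, with $f^{(n)}=1$, and it fits iff $n\le d-1$; this gives $w_1^{d-1}\neq0$, $w_1^d=0$, and recovers $G(d,1)=\mathbb{RP}^{d-1}$. For $m=2$ the candidates are two-row shapes $\xi=(a,b)$ with $b\le a\le d-2$ and $a+b=n$, where $f^{(a,b)}=\binom{a+b}{b}-\binom{a+b}{b-1}$. If $n=2^s-1$, every $\binom{2^s-1}{j}$ is odd, so $f^{(a,b)}$ is even unless $b=0$, and $b=0$ forces $a=2^s-1>d-2$ by minimality of $2^s$; hence $w_1^{2^s-1}=0$. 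If $n=2^s-2$, the shape $(2^{s-1}-1,\,2^{s-1}-1)$ has $f$ equal to the Catalan number $C_{2^{s-1}-1}$, which is odd because its index has the form $2^t-1$, and it fits since $2^{s-1}-1\le d-2$; hence $w_1^{2^s-2}\neq0$.

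For $m>2$ I would exhibit explicit odd-$f^\xi$ shapes inside $m\times(d-m)$ of the stated sizes. The two-row shape above already works whenever $2^{s-1}-1\le d-m$; in the remaining range --- $d$ close to its lower bound $2^{s-1}$ --- I would instead build $\xi$ level by level from its $2$-core tower, arranging the $(1)$-cores so that the number of rows and of columns of the resulting shape stays within the rectangle while $|\xi|$ reaches $2^s-2$. The bound $w_1^{d-m}\neq0$ in all cases is immediate, since $\xi=(d-m)$ is a single admissible row with $f=1$; in particular in the self-dual case $d=2m=2^s$ this already gives $w_1^{2^{s-1}}\neq0$. Finally, $w_1^{d-m+1}\neq0$ outside the two listed exceptions follows by checking (again via the $2$-core tower, or directly when the two-row/Catalan shape suffices) that some odd-$f^\xi$ shape of size $d-m+1$ fits in $m\times(d-m)$ unless $m=1$ or ($m=2$ and $d=2^s$).

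The routine part is the translation in the first paragraph: once multiplication by $w_1$ is recognized as ``adding boxes'', the cohomological question becomes one of counting tableaux mod $2$. The genuine obstacle --- and the source of every case distinction in the statement --- is the combinatorics of the third paragraph: determining exactly which shapes with an odd number of standard tableaux fit into a prescribed rectangle, and checking sharpness in the exceptional ranges. This is precisely the analysis of Hiller in~\cite{hil1980A,hil1980B}; the $2$-adic structure of hook lengths (equivalently, of $2$-core towers) is the tool that keeps it tractable, and I would follow that route.
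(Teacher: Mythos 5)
Your proposal is correct as far as it goes, and in fact it supplies more argument than the paper does: in the paper Theorem~\ref{sw1-height} is not proved but quoted from Hiller~\cite{hil1980A,hil1980B}, with only the remark that it results from applying the Pieri formula to the powers $w_1^n$. Your reduction is exactly the route behind that citation: since $w_1=[0,\ldots,0,1]$, iterating the Pieri rule shows that the coefficient of $[\xi]$ in $w_1^n$ is $f^\xi \bmod 2$, so $w_1^n\neq 0$ iff some shape $\xi$ of size $n$ inside the $m\times(d-m)$ rectangle has an odd number of standard Young tableaux (the intermediate shapes automatically stay in the rectangle, as you note). Your treatment of $m=1$, of $m=2$ (the all-odd binomials at $n=2^s-1$ forcing $b=0$ and then $2^s-1>d-2$ by minimality of $s$; the odd Catalan number $C_{2^{s-1}-1}$ at $n=2^s-2$, whose shape fits because $2^{s-1}<d$), of the universal bound $w_1^{d-m}\neq0$ via a single row, and of the subcase $d=2m=2^s$ are all correct. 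What you do not actually prove --- the existence of an odd-$f^\xi$ shape of size $2^s-2$ when $m>2$ and the Catalan shape does not fit (i.e.\ $2^{s-1}-1>d-m$), and of size $d-m+1$ outside the two listed exceptions --- is deferred to the $2$-core/hook-parity analysis of~\cite{hil1980A,hil1980B}. That deferral is legitimate in context, since the paper itself leans on the same references for the entire statement; just be aware that this is precisely where the content of all the case distinctions lives, so as a self-contained argument your text would still be incomplete without importing those computations.
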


Using this theorem, the Lusternik-Schnirelmann category of $G(d,m)$ can be estimated from below by the standard cohomology product length reasoning (maximum nonzero product length in the reduced cohomology). Let us state the explicit result. 

\begin{thm}
\label{grass-ls}
Let $2m\le d$ (if it is not, we consider $G(d, d-m)\sim G(d,m)$ instead and exchange $m$ and $d-m$), and let $w_1^n\not=0\in H^*(G(d,m))$. Then the Lusternik-Schnirelmann category is estimated as follows
$$
\cat G(d,m) \ge \min\{n+2, m(d-m) + 1\}.
$$
In particular, $\cat G(d,m) \ge \min\{d-m+2, m(d-m)+1\}$, and $\cat G(d,m)\ge \min\{d-m+3, m(d-m)+1\}$ if $m\not=1$, and either $m\not=2$ or $d\not=2^s$. Also, the inequality 
$$
\cat G(d,m) \ge \min\{d-m+2, m(d-m) + 1\}
$$
holds without the restriction $2m\le d$.
\end{thm}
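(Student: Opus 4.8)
The plan is to deduce this from the classical cup-length lower bound for Lusternik--Schnirelmann category, sharpened by one unit via Poincar\'e duality, and then to feed in the heights of $w_1$ recorded in Theorem~\ref{sw1-height}.

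First I would recall the standard inequality $\cat X \ge \ell(X)+1$, where $\ell(X)$ denotes the cup-length of $X$, i.e.\ the largest number of positive-degree classes of $H^*(X)$ (here with $Z_2$ coefficients) whose cup product is nonzero; with the normalization $\cat(\mathrm{pt})=1$ this is compatible with the bound $\cat M\le\dim M+1$ for a closed manifold, which is what the term $m(d-m)+1$ reflects. Since $w_1^n\ne 0$, at once $\ell(G(d,m))\ge n$, hence $\cat G(d,m)\ge n+1$. To gain the extra unit when $n<m(d-m)$, I would use that $G(d,m)$ is a closed connected manifold of dimension $N:=m(d-m)$, so mod-$2$ Poincar\'e duality gives a nonsingular pairing $H^j\times H^{N-j}\to H^N\cong Z_2$: choose $\beta\in H^{N-n}$ with $w_1^n\smile\beta\ne 0$; since $\deg\beta=N-n>0$, the product $w_1\smile\cdots\smile w_1\smile\beta$ (with $n$ factors $w_1$) has $n+1$ positive-degree factors and is nonzero, so $\ell(G(d,m))\ge n+1$ and $\cat G(d,m)\ge n+2$. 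When $n=N$ one already has $\cat G(d,m)\ge N+1$ from $w_1^N\ne 0$, and $n$ can never exceed $N$; so in every case $\cat G(d,m)\ge\min\{n+2,m(d-m)+1\}$, which is the first assertion.

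The two ``in particular'' clauses then follow purely by specializing $n$ using Theorem~\ref{sw1-height}: under $2m\le d$ one always has $w_1^{d-m}\ne 0$, which gives $\cat G(d,m)\ge\min\{d-m+2,m(d-m)+1\}$; and $w_1^{d-m+1}\ne 0$ except precisely when $m=1$, or $m=2$ and $d=2^s$, so outside these cases $\cat G(d,m)\ge\min\{d-m+3,m(d-m)+1\}$.

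To remove the hypothesis $2m\le d$ from the bound $\cat G(d,m)\ge\min\{d-m+2,m(d-m)+1\}$, I would invoke the diffeomorphism $G(d,m)\cong G(d,d-m)$ given by orthogonal complementation, which leaves $\cat$ unchanged. Setting $\mu=\min\{m,d-m\}$ so that $2\mu\le d$, the case already treated gives $\cat G(d,m)=\cat G(d,\mu)\ge\min\{d-\mu+2,\mu(d-\mu)+1\}$; since $\mu(d-\mu)=m(d-m)$ and $d-\mu=\max\{m,d-m\}\ge d-m$, this is at least $\min\{d-m+2,m(d-m)+1\}$. The argument presents no genuine difficulty --- its content sits entirely in Theorem~\ref{sw1-height}, which is cited --- so the only places I would slow down are the Poincar\'e-duality step (keeping the normalization of $\cat$ consistent with both the cup-length inequality and the bound $\cat\le\dim+1$) and the final comparison of the two minima.
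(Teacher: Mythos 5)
Your proposal is correct and follows essentially the same route as the paper: the paper's proof also splits into the cases $n=m(d-m)$ and $n<m(d-m)$, uses Poincar\'e duality to find an extra positive-degree class $\xi$ with $w_1^n\xi\ne 0$ in the second case, and then applies the standard cup-length/covering argument (its Lemma~\ref{cup-prod}) to get $\cat\ge n+1$ or $n+2$ respectively. Your explicit treatment of the ``in particular'' clauses via Theorem~\ref{sw1-height} and of the unrestricted bound via $G(d,m)\cong G(d,d-m)$ matches what the paper leaves implicit.
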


\begin{proof}
Consider two cases: $n=m(d-m)$ and $n<m(d-m)$. In the second case find $\xi\in H^*(G(d,m))$ such that 
$$
\dim \xi = m(d-m)-n,\ \dim\xi>0,\ w_1^n\xi\not=0,
$$
such $\xi$ exists by the Poincar\'e duality, because $n<m(d-m)$.

Then we apply the following well-known lemma (which we are also going to use in further proofs) to the nonzero product $w_1^n$ or $w_1^n\xi$.

\begin{lem}
\label{cup-prod}
Let $X$ be a topological space, $A_1,\ldots, A_l$ be its subspaces such that 
$$
\bigcup_{i=1}^l A_l = X.
$$
Let $\alpha_1,\ldots, \alpha_l$ be some cohomology classes such that 
$$
\alpha_1\cdot\dots\cdot\alpha_l\not=0\in H^*(X).
$$
Then for some $i$ the class $\alpha_i$ is nonzero on $A_i$.
\end{lem}

This lemma shows that we need at least $n+1$ null-homotopic subsets to cover $G(d,m)$ in the first case, and at least $n+2$ null-homotopic subsets in the second case. 

\end{proof}

\section{Transversal analogues of the colorful Helly theorem}
\label{col-helly-trans-sec}

In order to state some geometric results we need to make some definitions and remind some known facts.

\begin{defn}
A family $\mathcal F$ is called \emph{intersecting}, if its intersection is nonempty.
\end{defn}

Recall the colorful Helly theorem of B\'ar\'any and Lov\'asz~\cite{ba1982}, see also~\cite{abbfm2009}.

\begin{thm*}[The colorful Helly theorem]
Let $\mathcal F_1, \ldots,\mathcal F_{d+1}$ be families of convex compact sets in $\mathbb R^d$. Suppose that for any system of representatives $\{X_i\in\mathcal F_i\}_{i=1}^{d+1}$ the intersection $\bigcap_{i=1}^{d+1} X_i$ is non-empty. Then for some $i$ the family $\mathcal F_i$ is intersecting.
\end{thm*}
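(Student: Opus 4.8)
The plan is to derive the colorful Helly theorem from the colorful Carath\'eodory theorem of B\'ar\'any~\cite{ba1982} by convex duality, in the same way that ordinary Helly follows from ordinary Carath\'eodory. First I would reduce to finite families: assuming, for contradiction, that no $\mathcal F_i$ is intersecting, compactness (the finite intersection property for compact sets) lets me replace each $\mathcal F_i$ by a finite subfamily $\mathcal G_i$ with $\bigcap\mathcal G_i=\emptyset$; the families $\mathcal G_1,\dots,\mathcal G_{d+1}$ still satisfy the colorful hypothesis and still have no intersecting member, so it suffices to reach a contradiction under the extra assumption that all $\mathcal F_i$ are finite.

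The next step is the dualization. For a compact convex $C\subseteq\mathbb R^d$ and $a\in\mathbb R^d$ write $h_C(a)=\max_{x\in C}\langle a,x\rangle$, and encode the tight supporting halfspace $\{x:\langle a,x\rangle\le h_C(a)\}$ of $C$ by the vector $(a,h_C(a))\in\mathbb R^{d+1}$. A standard convex-duality (Farkas lemma) argument shows that finitely many compact convex sets have empty intersection if and only if some of their tight supporting halfspaces have encoding vectors admitting a nonnegative combination equal to $p:=(0,\dots,0,-1)$; concretely, at the minimizer $x_0$ of the strictly convex coercive function $x\mapsto\sum_j\operatorname{dist}(x,C_j)^2$ the vectors $a_j=x_0-\pi_{C_j}(x_0)$ satisfy $\sum_j a_j=0$ and $\sum_j h_{C_j}(a_j)=-\sum_j\|a_j\|^2<0$, and rescaling gives $p$. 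Hence, setting
$$
X_i=\{(a,h_C(a))\in\mathbb R^{d+1}:C\in\mathcal F_i,\ a\in\mathbb R^d\},
$$
the assumption ``$\bigcap\mathcal F_i=\emptyset$ for every $i$'' becomes ``$p\in\operatorname{cone}(X_i)$ for every $i=1,\dots,d+1$'', whereas the colorful hypothesis becomes ``for no colorful choice $C_i\in\mathcal F_i$ is $p$ a nonnegative combination of the $(a_i,h_{C_i}(a_i))$''.

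Finally I would apply the conic form of the colorful Carath\'eodory theorem in $\mathbb R^{d+1}$ to the $d+1$ color classes $X_1,\dots,X_{d+1}$ and the point $p$: since $p\in\operatorname{cone}(X_i)$ for every $i$, there are a colorful selection $(a_i,h_{C_i}(a_i))\in X_i$ (so $C_i\in\mathcal F_i$) and scalars $\lambda_i\ge0$ with $\sum_{i=1}^{d+1}\lambda_i(a_i,h_{C_i}(a_i))=p$; comparing coordinates gives $\sum_i\lambda_i a_i=0$ and $\sum_i\lambda_i h_{C_i}(a_i)=-1$, which certifies $\bigcap_{i=1}^{d+1}C_i=\emptyset$ and contradicts the colorful hypothesis applied to the colorful system $(C_1,\dots,C_{d+1})$. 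The main obstacle is the tight numerology: affine functions on $\mathbb R^d$ form a $(d+1)$-dimensional space, so the dual picture lives in $\mathbb R^{d+1}$ and the $d+1$ color classes are exactly what is needed to invoke colorful Carath\'eodory there — but it has to be the \emph{conic} version in $\mathbb R^{d+1}$, which uses $d+1$ colors, not the affine version, which would require $d+2$; since $p$ points ``opposite'' to the useful directions and the encoding vectors have last coordinates of both signs, there is no hyperplane slice reducing this to the affine statement, so establishing (or citing with care) the conic colorful Carath\'eodory theorem, together with a clean Farkas certificate valid for compact families, is where the real work lies.
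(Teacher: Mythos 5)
The paper does not prove this statement at all: it is quoted as the known colorful Helly theorem of B\'ar\'any and Lov\'asz, with references to \cite{ba1982} and \cite{abbfm2009}, and then used as a black box. So there is no in-paper argument to compare against; what you have written is essentially the classical Lov\'asz-style derivation of colorful Helly from colorful Carath\'eodory by linear-programming duality, and it is sound. Your compactness reduction to finite subfamilies $\mathcal G_i$ is fine, and your Farkas certificate is correct: at a minimizer $x_0$ of $\sum_j \operatorname{dist}(x,C_j)^2$ (which exists by coercivity --- strict convexity is neither true in general nor needed), the vectors $a_j=x_0-\pi_{C_j}(x_0)$ satisfy $\sum_j a_j=0$, $h_{C_j}(a_j)=\langle a_j,\pi_{C_j}(x_0)\rangle$, hence $\sum_j h_{C_j}(a_j)=-\sum_j\|a_j\|^2<0$, and this is nonzero precisely because $\bigcap\mathcal G_i=\emptyset$; conversely a nonnegative combination equal to $(0,\dots,0,-1)$ certifies empty intersection of the sets actually used. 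The one ingredient you must supply or cite precisely is the conic colorful Carath\'eodory theorem: $d+1$ color classes in $\mathbb R^{d+1}$, each whose cone contains $p$, admit a colorful selection whose cone contains $p$. This is true and standard, and your finiteness reduction makes its proof easy: since $p\in\operatorname{cone}(X_i)$ uses only finitely many vectors, you may take each $X_i$ finite, and then B\'ar\'any's distance-decreasing swap argument goes through verbatim for cones (if the nearest point $q$ of a minimizing colorful cone to $p$ has $q\ne p$, set $a=p-q$; all generators with positive coefficient lie in the hyperplane $a^\perp$, so by conic Carath\'eodory in $a^\perp$ some color is unused and can be swapped for a generator with $\langle a,x\rangle>0$, strictly decreasing the distance). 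With that lemma written out or carefully referenced, your proof is complete; its payoff relative to the paper's bare citation is a self-contained argument, at the price of importing the colorful Carath\'eodory machinery, which is exactly how the original result was obtained anyway.
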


In the sequel we call the partition $\mathcal F = \bigcup_{i=1}^{d+1} \mathcal F_i$ a \emph{painting with $d+1$ colors}. Subfamilies of $\mathcal F$ that have at most one set of each color are called \emph{heterochromatic}. It is natural to ask, what happens if the number of colors is less than $d+1$. Some results of this kind were already established in~\cite[Theorems~21,22,23]{kar2009}. We are going to prove more results in this direction.

\begin{thm}
\label{col-helly-trans}
Let $\mathcal F$ be a family of $(d-m+1)(\rho+k+1)$ compact convex sets in $\mathbb R^d$ 
painted with $d-m+1$ colors with $\rho+k+1$ convex sets of each color. Suppose that every heterochromatic subset of $\mathcal F$ is intersecting. Suppose also that the class
$$
[\underbrace{0,\ldots,0}_\rho, \underbrace{k,\ldots,k}_{m-\rho}]^{d-m+1}
$$
is nonzero on $G(d, m)$.

Then there exists a color and a $\rho$-transversal plane to all convex sets of $\mathcal F$ painted with this color.
\end{thm}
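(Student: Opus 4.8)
The plan is to combine Theorem~\ref{top-trans} with Lemma~\ref{cup-prod} in the cohomology of $G(d,m)$. For each color $i=1,\ldots,d-m+1$ let $\mathcal F_i$ denote the $\rho+k+1$ convex sets of color $i$, and let $\mathcal T_m(\mathcal F_i)\subset G(d,m)$ be the set of linear $m$-transversals to $\mathcal F_i$ (working with linear planes through the origin, i.e. in $G(d,m)$ rather than $M(d,m)$; this is harmless since a common point of a heterochromatic system can be taken to be the origin after translation, or one passes to the projective/affine setting of Section~\ref{trans-def-sec} verbatim). Arguing by contradiction, assume that no color admits a $\rho$-transversal plane. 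Then by Theorem~\ref{top-trans}, applied to each $\mathcal F_i$ (which has exactly $\rho+k+1$ members), the family $\mathcal F_i$ has no topological $\rho$-transversal of index $(m,k)$, i.e. the Schubert cocycle $[\underbrace{0,\ldots,0}_{\rho+1},k,\ldots,k]$ restricts to zero on $\mathcal T_m(\mathcal F_i)$.

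The second ingredient is that the complements $A_i = G(d,m)\setminus \mathcal T_m(\mathcal F_i)$ cover $G(d,m)$. This is exactly where the colorful Helly hypothesis enters: if some $m$-plane $H$ were transversal to every color, then choosing for each $i$ a member $X_i\in\mathcal F_i$ with $H\cap X_i\neq\emptyset$ we would get $\ldots$ — but this is not quite enough, so the right way is to dualize. Concretely, I would instead show that the sets
$$
B_i = \{H\in G(d,m) : H \text{ is \emph{not} an } m\text{-transversal to } \mathcal F_i\}
$$
cover $G(d,m)$, by invoking the colorful Helly theorem ``fiberwise'': suppose $H$ lies in no $B_i$, i.e. $H$ meets every set of every color; then the point-families $\{H\cap X : X\in\mathcal F_i\}$ (nonempty compact convex subsets of the affine plane $H\cong\mathbb R^m$) satisfy, for every heterochromatic system of representatives across the $d-m+1 \le m+1$... and here one needs $d-m+1$ colors to behave well inside an $m$-dimensional plane, which forces using the B\'ar\'any–Lov\'asz theorem in $\mathbb R^m$ only after reducing the number of colors appropriately. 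The cleanest route, and the one I expect the authors take, is: the heterochromatic-intersecting hypothesis says precisely that for any choice $X_i\in\mathcal F_i$ the intersection $\bigcap_{i=1}^{d-m+1} X_i$ is nonempty in $\mathbb R^d$; an $m$-plane $H$ fails to be a transversal to color $i$ exactly when $H$ misses some $X_i\in\mathcal F_i$. If $H\notin B_i$ for all $i$, then $H$ meets every member of $\mathcal F$, and in particular meets $\bigcap X_i$ for every heterochromatic choice — which gives a common transversal point but not directly a $\rho$-transversal; so one records only the topological conclusion: on the open set $G(d,m)\setminus\bigcup B_i$ one cannot derive a contradiction yet.

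The decisive step is the cohomological one. Writing $\mathcal T_m(\mathcal F_i)$ as an \emph{open} subset of $G(d,m)$ (transversality to a compact convex body is an open condition on $H$), its complement $B_i$ is closed, hence a reasonable subspace; the restriction of $[\underbrace{0,\ldots,0}_{\rho+1},k,\ldots,k]$ to $\mathcal T_m(\mathcal F_i)$ being zero means, after a Poincar\'e-duality / excision argument as in the last paragraph of Section~\ref{schubert-sec}, that the \emph{dual} Schubert class $[\underbrace{0,\ldots,0}_\rho,\underbrace{k,\ldots,k}_{m-\rho}]$ — this is the class $D$ applied to the relevant Schubert cycle — is nonzero on $B_i$ only through a class pulled back from $G(d,m)$; more precisely the product length argument gives: since $[\underbrace{0,\ldots,0}_\rho,\underbrace{k,\ldots,k}_{m-\rho}]$ vanishes on each $\mathcal T_m(\mathcal F_i)$, its $(d-m+1)$-st power, which is nonzero on $G(d,m)$ by hypothesis, would by Lemma~\ref{cup-prod} applied to the covering $G(d,m)=\bigcup_{i=1}^{d-m+1}(\text{the set on which one factor vanishes})$... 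I would set $\alpha_i$ to be this Schubert class for each $i$, observe $\alpha_1\cdots\alpha_{d-m+1}=[\underbrace{0,\ldots,0}_\rho,\underbrace{k,\ldots,k}_{m-\rho}]^{d-m+1}\neq 0$, and note that $G(d,m)=\bigcup_{i}\bigl(G(d,m)\setminus \mathcal T_m(\mathcal F_i)\bigr)$ follows from colorful Helly because any $H$ transversal to all colors produces, via a representative-selection and colorful Helly inside the hyperplane-at-infinity direction sphere, an actual $\rho$-transversal to one color — contradiction with the standing assumption. Then Lemma~\ref{cup-prod} yields an $i$ with $\alpha_i$ nonzero on $G(d,m)\setminus\mathcal T_m(\mathcal F_i)$; but this complement is disjoint from the Schubert cell Poincar\'e-dual to $\alpha_i$ up to the codimension-$\ge 3$ singular locus (exactly the situation $X\cap\{\lambda_1,\ldots,\lambda_m\}=\emptyset$ from Section~\ref{schubert-sec}), forcing $\alpha_i|_{G(d,m)\setminus\mathcal T_m(\mathcal F_i)}=0$ — the final contradiction.

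\textbf{Main obstacle.} The routine parts are the Schubert bookkeeping and Lemma~\ref{cup-prod}. The genuine difficulty, which I would spend the most care on, is establishing that $\bigcup_{i=1}^{d-m+1}\bigl(G(d,m)\setminus\mathcal T_m(\mathcal F_i)\bigr)=G(d,m)$, i.e. that under the heterochromatic-intersecting hypothesis it is impossible to have a single $m$-plane transversal to \emph{every} color while \emph{no} color has a $\rho$-transversal; this is a genuine geometric statement, and it is presumably here that one must invoke the colorful Helly theorem (in $\mathbb R^d$, or in a $d$-dimensional space of directions) rather than in $\mathbb R^m$, reconciling the count $d-m+1$ of colors with the dimension, and it is the only place where the convexity of the sets (as opposed to their homotopy type) is really used beyond Theorem~\ref{top-trans}.
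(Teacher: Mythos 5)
Your proposal assembles the right ingredients (Theorem~\ref{top-trans}, Lemma~\ref{cup-prod}, the colorful Helly theorem), but the way you glue them together does not work, and the step you yourself flag as the ``main obstacle'' is exactly where the argument breaks: you try to cover $G(d,m)$ by the \emph{complements} of the transversal spaces, i.e.\ you want to show that under the hypotheses (plus the standing assumption that no color has a $\rho$-transversal) no single $m$-plane can be transversal to every color. That claim is not implied by the heterochromatic-intersecting hypothesis and is not how the covering arises; moreover your final step --- that $G(d,m)\setminus\mathcal T_m(\mathcal F_i)$ is disjoint from the Schubert cycle Poincar\'e-dual to $\alpha_i$ --- has no justification at all (there is no relation between failing to be a transversal to $\mathcal F_i$ and meeting a fixed coordinate flag), and there is also a type mismatch throughout, since $\mathcal T_m(\mathcal F_i)$ consists of \emph{affine} planes in $M(d,m)$ and cannot simply be intersected with or complemented inside $G(d,m)$.

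The paper's covering is different and is the key idea you are missing. For a \emph{direction} $L\in G(d,m)$, project all of $\mathcal F$ orthogonally to $L^\perp\cong\mathbb R^{d-m}$; the heterochromatic hypothesis persists under projection, and since there are exactly $d-m+1$ colors, the colorful Helly theorem applies in dimension $d-m$ and yields a color $i$ and a point $x\in L^\perp$ common to all projections of $\mathcal F_i$, i.e.\ $L+x$ is an $m$-transversal to $\mathcal F_i$ with direction $L$. Painting $L$ with such a color gives a covering $G(d,m)=X_1\cup\dots\cup X_{d-m+1}$, where $X_i$ is the set of directions in which $\mathcal F_i$ admits an $m$-transversal. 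Since $[\underbrace{0,\ldots,0}_\rho,\underbrace{k,\ldots,k}_{m-\rho}]^{\,d-m+1}\neq 0$ on $G(d,m)$, Lemma~\ref{cup-prod} gives an $i$ with the class nonzero on $X_i$; the direction projection $\mathcal T_m(\mathcal F_i)\to X_i$ has convex (hence contractible) point preimages, so it induces an isomorphism on \v Cech cohomology and the class is nonzero on $\mathcal T_m(\mathcal F_i)$, i.e.\ $\mathcal F_i$ has a topological $\rho$-transversal of index $(m,k)$. Then Theorem~\ref{top-trans} is used in its \emph{positive} direction (not the contrapositive, as in your sketch) on the family $\mathcal F_i$ of $\rho+k+1$ sets to produce the ordinary $\rho$-transversal. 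Note also that in your sketch the attempted applications of colorful Helly inside an $m$-plane or on a sphere of directions never match the count of $d-m+1$ colors; it is precisely the projection to the $(d-m)$-dimensional space $L^\perp$ that reconciles that count.
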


The condition of the nonzero power in the cohomology can be simplified in the following cases:

\begin{itemize}
\item 
$\rho=m-1$. In this case the Pieri formula (see Section~\ref{mult-sec}) shows that the condition holds if $m\ge d-m+1$, and in some other cases. 
\item 
$k=1$. In this case the transposed (in the sense $G(d,m)\sim G(d,d-m)$) Pieri formula shows that the condition may hold if $(m-\rho)(d-m+1)\le m(d-m)$ depending on the coefficients, arising from applying the Pieri formula several times.
\item
$\rho=m-1$ and $k=1$. In this case by Theorem~\ref{sw1-height} we have two cases:

a) $2m\leq d$. If $m>2$, or $m=2$ and $d$ is not a power of two, then theorem holds. It also holds in some of the other cases.

b) $2m>d$. Hence $d-m<m$ and the theorem holds in this case without other restrictions.
\end{itemize}

Let us give a particular example ($d=4,m=3,k=1,\rho=2$) of this theorem: If $\mathcal F$ is a family of $4$ compact, convex, red sets and $4$ compact, convex, blue sets in $\mathbb R^4$, such that every red set intersects every blue set, then there is a color and a $2$-plane transversal to all convex sets of $\mathcal F$ painted with this color.

\begin{proof}[Proof of Theorem~\ref{col-helly-trans}]
For any color $i$ denote $\mathcal F_i$ the subfamily of $\mathcal F$ consisting of all its sets of color $i$.

Consider a linear $m$-subspace $L\subseteq\mathbb R^d$, and its orthogonal complement $L^\perp$. The projections of $\mathcal F$ to $L^\perp$ satisfy the colorful Helly theorem of dimension $d-m$. Hence for some color $i$ there is a point $x\in L^\perp$ such that for every set of $\mathcal F_i$ its projection contains $x$. It means that $L+x$ is an $m$-transversal to $\mathcal F_i$. Let us paint $L$ with color $i$ in this case. Thus the Grassmannian $G(d,m)$ is covered by $d-m+1$ colors $X_1,\ldots, X_{d-m+1}$.

From Lemma~\ref{cup-prod} it follows that the class $[\underbrace{0,\ldots,0}_\rho, \underbrace{k,\ldots,k}_{m-\rho}]$ is nonzero on some $X_i$, and therefore on the corresponding $\mathcal T_m(\mathcal F_i)$ for some $i$. The last claim is true because the natural projection $\mathcal T_m(\mathcal F_i)\to X_i$ has convex preimages of points and therefore induces an isomorphism of \v Cech cohomology. Then we apply Theorem~\ref{top-trans} and obtain a $\rho$-transversal to $\mathcal F_i$.
\end{proof}

Theorem~\ref{col-helly-trans} may be generalized (modulo some cohomology computations) to the case when the transversal dimension $\rho$ and the number $k$ are chosen independently for every color.

\begin{thm}
\label{col-helly-trans2}
Let $\mathcal F$ be a family of compact convex sets in $\mathbb R^d$, painted with $d-m+1$ colors so that color $i$ has $\rho_i+k_i+1$ sets. Suppose that every heterochromatic subset of $\mathcal F$ is intersecting. Suppose also that the product
$$
\prod_{i=1}^{d-m+1}[\underbrace{0,\ldots,0}_{\rho_i}, \underbrace{k_i,\ldots,k_i}_{m-\rho_i}]
$$
is nonzero on $G(d, m)$.

Then there exists a color $i$ and a $\rho_i$-transversal plane to all convex sets of $\mathcal F$ painted with this color.
\end{thm}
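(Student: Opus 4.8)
The plan is to adapt the proof of Theorem~\ref{col-helly-trans} almost verbatim, replacing the single cup power by the cup product of the distinct Schubert cocycles associated with each color. As before, fix a linear $m$-subspace $L\subseteq\mathbb R^d$ and project the family $\mathcal F$ onto $L^\perp$, a space of dimension $d-m$. Since $\mathcal F$ is painted with $d-m+1$ colors and every heterochromatic subset is intersecting, the colorful Helly theorem in $L^\perp$ produces a color $i=i(L)$ and a point $x\in L^\perp$ lying in the projection of every set of $\mathcal F_i$; hence $L+x$ is an $m$-transversal to $\mathcal F_i$. Painting $L$ with that color yields a covering $G(d,m)=X_1\cup\dots\cup X_{d-m+1}$.

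Next I would invoke Lemma~\ref{cup-prod}: since $\prod_{i=1}^{d-m+1}[\underbrace{0,\ldots,0}_{\rho_i},\underbrace{k_i,\ldots,k_i}_{m-\rho_i}]\neq 0$ on $G(d,m)$ and the $X_i$ cover $G(d,m)$, for some color $i$ the class $[\underbrace{0,\ldots,0}_{\rho_i},\underbrace{k_i,\ldots,k_i}_{m-\rho_i}]$ is nonzero on $X_i$. As in the proof of Theorem~\ref{col-helly-trans}, the natural projection $\mathcal T_m(\mathcal F_i)\to X_i$ has convex (hence acyclic) preimages of points, so it induces an isomorphism on \v Cech cohomology; therefore the same Schubert cocycle is nonzero on $\mathcal T_m(\mathcal F_i)$. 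This says precisely that $\mathcal F_i$ has a topological $\rho_i$-transversal of index $(m,k_i)$ (after the passage $G(d,m)\hookrightarrow M(d,m)\hookrightarrow G(d+1,m+1)$, which only prepends a zero to the Schubert symbol). Since $\mathcal F_i$ consists of exactly $\rho_i+k_i+1$ compact convex sets, Theorem~\ref{top-trans} applies and yields an ordinary $\rho_i$-transversal to all sets of color $i$, as desired.

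The main obstacle, as flagged by the parenthetical ``modulo some cohomology computations,'' is not in the argument above but in verifying the hypothesis that the product of the $d-m+1$ Schubert cocycles is nonzero; this requires iterated application of the Pieri-type multiplication formulas from Section~\ref{mult-sec} and is genuinely the delicate point when one wants explicit geometric corollaries. One small technical care-point is dimensional: the cocycles must actually multiply to land in $H^{m(d-m)}(G(d,m))$ or below, i.e. $\sum_i (m-\rho_i)k_i\le m(d-m)$, which is implicit in the nonvanishing hypothesis. A second point, handled exactly as in the earlier proof, is the continuous-selection issue in the definition of the sections used by Theorem~\ref{top-trans}: one first treats strictly convex bodies with nonempty interior and then passes to the general case by $\varepsilon$-approximation and compactness. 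Beyond these, the proof is a direct transcription of the single-color case.
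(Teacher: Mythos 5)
Your proposal is correct and follows exactly the route the paper intends: the paper gives no separate proof of Theorem~\ref{col-helly-trans2}, stating only that it is proved as Theorem~\ref{col-helly-trans} ``modulo some cohomology computations,'' and your argument is precisely that adaptation (colorful Helly on the projections to $L^\perp$, the covering $X_1,\dots,X_{d-m+1}$ of $G(d,m)$, Lemma~\ref{cup-prod} applied to the product of the distinct Schubert cocycles, pullback to $\mathcal T_m(\mathcal F_i)$, and Theorem~\ref{top-trans}). Your closing remarks correctly identify that the only genuinely delicate point is verifying the nonvanishing of the cohomology product, which is a hypothesis here rather than something to prove.
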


Generally, Theorem~\ref{col-helly-trans2} needs some explicit computations with Schubert cocycles. 
We give a particular case of Theorem~\ref{col-helly-trans2}, where the computations are replaced by a simple inequality.

\begin{cor}
\label{col-helly-trans-ineq}
Let $\mathcal F$ be a family of compact convex sets in $\mathbb R^d$, painted with $k+1$ colors so that color $i$ has $\rho_i+k+1$ sets. Suppose that every heterochromatic subset of $\mathcal F$ is intersecting. Suppose also that
$$
\sum_{i=1}^{k+1} \rho_i \ge k(d-k),
$$
or equivalently
$$
|\mathcal F| \ge kd+2k+1.
$$

Then there exists a color $i$ and a $\rho_i$-transversal plane to all convex sets of $\mathcal F$ painted with this color.
\end{cor}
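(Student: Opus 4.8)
The plan is to deduce Corollary~\ref{col-helly-trans-ineq} from Theorem~\ref{col-helly-trans2}, applied with transversal dimension $m=d-k$ (so that $d-m+1=k+1$ equals the number of colors) and with $k_i=k$ for every $i$. Under this specialization the only thing that has to be checked is that, whenever $\sum_{i=1}^{k+1}\rho_i\ge k(d-k)$, the product
$$
\prod_{i=1}^{k+1}\Bigl[\underbrace{0,\ldots,0}_{\rho_i},\underbrace{k,\ldots,k}_{m-\rho_i}\Bigr],\qquad m=d-k,
$$
is nonzero in $H^*(G(d,m))$. The two forms of the hypothesis are equivalent by a one-line count: $|\mathcal F|=\sum_{i=1}^{k+1}(\rho_i+k+1)=\bigl(\sum_i\rho_i\bigr)+(k+1)^2$, so $\sum_i\rho_i\ge k(d-k)$ holds iff $|\mathcal F|\ge k(d-k)+(k+1)^2=kd+2k+1$.

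The heart of the matter is a Pieri-formula computation (Section~\ref{mult-sec}) that uses crucially the equality $d-m=k$ coming from $m=d-k$. Let $\bar w_k=[\underbrace{0,\ldots,0}_{m-1},k]$ be the top dual Stiefel-Whitney class of $G(d,m)$ (legitimate since $k=d-m$). I would prove by induction on $t$ that
$$
\bar w_k^{\,t}=\Bigl[\underbrace{0,\ldots,0}_{m-t},\underbrace{k,\ldots,k}_{t}\Bigr]\qquad(0\le t\le m).
$$
For $t=1$ this is the definition. For the inductive step, apply the Pieri formula to $\lambda=(\underbrace{0,\ldots,0}_{m-t},\underbrace{k,\ldots,k}_{t})$ and multiply by $\bar w_k=[0,\ldots,0,k]$; since $\lambda_{m+1}=d-m=k$, the constraints $\lambda_j\le\xi_j\le\lambda_{j+1}$ force $\xi_j=0$ for $j<m-t$ and $\xi_j=k$ for $j>m-t$, and then $\sum_j\xi_j=k+\sum_j\lambda_j$ forces $\xi_{m-t}=k$ as well, so the sum collapses to the single term $[\underbrace{0,\ldots,0}_{m-t-1},\underbrace{k,\ldots,k}_{t+1}]$. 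It follows that each factor above equals $\bar w_k^{\,m-\rho_i}$, hence the product equals $\bar w_k^{\sum_i(m-\rho_i)}=\bar w_k^{(k+1)m-\sum_i\rho_i}$.

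Finally, $\bar w_k^{\,n}\neq 0$ in $H^*(G(d,m))$ precisely when $n\le m$: for $n\le m$ the above identity exhibits $\bar w_k^{\,n}$ as a Schubert basis element (for $n=m$ it is the fundamental class $[\underbrace{k,\ldots,k}_m]=[\underbrace{d-m,\ldots,d-m}_m]$), while for $n>m$ its degree $nk$ exceeds $\dim G(d,m)=mk$. Thus the product is nonzero iff $(k+1)m-\sum_i\rho_i\le m$, i.e. iff $\sum_i\rho_i\ge km=k(d-k)$—exactly the hypothesis. Invoking Theorem~\ref{col-helly-trans2} then produces a color $i$ admitting a $\rho_i$-transversal plane, and we are done.

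The step I expect to require the most care is the inductive Pieri computation in the third paragraph: one must track the indices $\lambda_j,\xi_j,\lambda_{j+1}$ carefully and, above all, use the equality $d-m=k$, which is what forces the Pieri sum to collapse to a single surviving Schubert cocycle (when $d-m>k$ the power $\bar w_k^{\,t}$ is a genuine sum of several Schubert cocycles and the counting would be more delicate). The remaining ingredients—the arithmetic reformulation of the hypothesis and the final dimension count—are routine.
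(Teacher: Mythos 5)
Your proposal is correct and follows essentially the same route as the paper: specialize Theorem~\ref{col-helly-trans2} to $m=d-k$, $k_i=k$, and use the Pieri formula to identify each factor with a power of $[0,\ldots,0,k]$, so the whole product collapses to the single Schubert cocycle $[0,\ldots,0,\underbrace{k,\ldots,k}_{\sum_i(m-\rho_i)}]$, nonzero exactly when $\sum_i(m-\rho_i)\le m$. You merely spell out the inductive Pieri computation that the paper leaves implicit.
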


\begin{proof}
Denote $m=d-k$. The Pieri formula in $H^*(G(d,m))$ implies
$$
[\underbrace{0,\ldots,0}_{\rho_i}, \underbrace{k,\ldots, k}_{m-\rho_i}] = [0,\ldots,0, k]^{m-\rho_i},
$$
and 
$$
\prod_{i=1}^{k+1}[\underbrace{0,\ldots,0}_{\rho_i}, \underbrace{k,\ldots,k}_{m-\rho_i}] = [0,\ldots,0, \underbrace{k,\ldots, k}_{\sum_{i=1}^{k+1} (m-\rho_i)}],
$$
which is nonzero iff $\sum_{i=1}^{k+1} (m-\rho_i)\le m$. The last condition is obviously equivalent to the condition of the theorem.
\end{proof}

We also deduce the following result from Theorems~\ref{top-trans-k1} and \ref{grass-ls}.

\begin{thm} 
\label{col-helly-trans-ls}
Let $\mathcal F$ be a family of $n(\rho+2)$ compact ($\rho\ge 1$, $n\ge 2$), convex sets in $\mathbb R^{n+\rho}$, painted with $n$ colors, in which we have $\rho+2$ convex sets of each color. Suppose that every heterochromatic subset of $\mathcal F$ is intersecting. Then there is a color and a $\rho$-transversal plane to all convex sets of $\mathcal F$, painted with this color.
\end{thm}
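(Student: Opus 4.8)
The plan is to mimic the proof of Theorem~\ref{col-helly-trans}, but with two changes: the dimension is chosen to be $d=n+\rho$ with $m=\rho+1$, so that $d-m+1=n$ matches the number of colors, and the cohomological nonvanishing input comes not from a Pieri computation but from the Lusternik--Schnirelmann category of the relevant Grassmannian. First I would project: for a linear $m$-subspace $L\subseteq\mathbb R^{n+\rho}$, the orthogonal complement $L^\perp$ has dimension $d-m=n-1$, and projecting $\mathcal F$ into $L^\perp$ gives $n$ families of $\rho+2$ convex sets each satisfying the hypothesis of the colorful Helly theorem in dimension $n-1$. Hence some color $i$ has a common point $x$ of all projections, so $L+x$ is an $m$-transversal to $\mathcal F_i$, and we paint $L$ with that color. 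This covers $G(n+\rho, \rho+1)$ by $n$ sets $X_1,\ldots,X_n$, each $X_i$ lying in the image of the projection $\mathcal T_m(\mathcal F_i)\to G(n+\rho,\rho+1)$, which induces an isomorphism on \v Cech cohomology because point-preimages are convex.

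Next I would argue by contradiction: suppose no color admits a $\rho$-transversal. By Theorem~\ref{top-trans-k1} (applied with $k=1$, so that a family of $\rho+2$ sets with no $\rho$-transversal has $\mathcal T_m(\mathcal F_i)$ homotopy equivalent to $\mathcal T_m(\{\alpha_1,\ldots,\alpha_{\rho+2}\})$ and hence, since $m=\rho+1$, contractible), each $\mathcal T_m(\mathcal F_i)$ is contractible. Therefore each $X_i\subseteq G(n+\rho,\rho+1)$ is null-homotopic (more precisely, the inclusion $X_i\hookrightarrow G$ factors cohomologically through the contractible space $\mathcal T_m(\mathcal F_i)$, so every positive-degree class restricts to zero on $X_i$). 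But then the Grassmannian $G(n+\rho,\rho+1)$ is covered by $n$ subsets each carrying no nontrivial cohomology restriction, forcing $\cat G(n+\rho,\rho+1) \le n$, equivalently a bound of the form: the maximal nonzero cup-product length in reduced cohomology is $\le n-1$.

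The final step is the numerical contradiction. Here $G(n+\rho,\rho+1) \cong G(n+\rho, n-1)$, and with $\rho\ge 1$, $n\ge 2$ one has $2(n-1)\le n+\rho$ in the borderline-and-beyond regime, or one passes to the complementary dimension as in Theorem~\ref{grass-ls}; in any case Theorem~\ref{grass-ls} gives $\cat G(d,d-m+1)$ — equivalently $\cat G(n+\rho,\rho+1)$ — at least $\min\{(n+\rho)-(\rho+1)+2,\,(\rho+1)(n-1)+1\} = \min\{n+1,\,(\rho+1)(n-1)+1\}$, and since $\rho\ge 1,n\ge 2$ forces $(\rho+1)(n-1)+1\ge n+1$, we get $\cat G(n+\rho,\rho+1)\ge n+1 > n$, contradicting the covering. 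Hence some color has a $\rho$-transversal.

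The main obstacle I anticipate is the bookkeeping in the last paragraph: one must correctly identify which Grassmannian appears ($G(n+\rho,\rho+1)$ versus its dual $G(n+\rho,n-1)$), check that Theorem~\ref{grass-ls}'s hypotheses are met, and verify the inequality $(\rho+1)(n-1)+1\ge n+1$ in all the edge cases $\rho=1$, $n=2$; the geometric part (projection plus colorful Helly plus the covering) is routine given Theorems~\ref{top-trans-k1} and~\ref{grass-ls}, as is the \v Cech-cohomology isomorphism from convex fibers. One should also make sure Lemma~\ref{cup-prod} is invoked in the right direction — covering by $n$ null-cohomology sets obstructs any length-$n$ nonzero product, which is exactly the content of $\cat > n$ being impossible.
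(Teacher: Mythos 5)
Your proposal is correct and follows essentially the same route as the paper: set $d=n+\rho$, $m=\rho+1$, cover $G(n+\rho,\rho+1)$ by the $n$ sets $X_i$ obtained from the colorful Helly theorem applied to the projections onto $L^\perp$, use Theorem~\ref{top-trans-k1} (with $m=\rho+1$) to make each $\mathcal T_m(\mathcal F_i)$, hence each $X_i$, cohomologically trivial, and contradict the Lusternik--Schnirelmann/cup-length bound of Theorem~\ref{grass-ls}, via the same inequalities $n\le(\rho+1)(n-1)$ and $\cat G(n+\rho,\rho+1)\ge n+1$. Your bookkeeping in the final step matches the paper's.
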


A particular example of this theorem ($n=3,\rho=1$) is as follows: If $\mathcal F$ is a family of $3$ compact, convex, red sets; $3$ compact convex, blue sets; and $3$ compact, convex, green sets in $\mathbb R^4$ such that every heterochromatic triple is intersecting, then there is a color and a line transversal to all convex sets of $\mathcal F$ painted with this color. Note that Theorem~\ref{col-helly-trans} fails to resolve this case.

\begin{proof}[Proof of Theorem~\ref{col-helly-trans-ls}]
Put $d=n+\rho$. 

The proof proceeds as the proof of Theorem~\ref{col-helly-trans}. We assume the contrary, but instead of obtaining a zero cohomology product in $H^*(G(d,\rho+1))$, we simply note that the sets $X_i$ cannot cover the Grassmannian $G(d,\rho+1)$ by the definition of the Lusternik-Schnirelmann category. Indeed, they are null-homotopic by Theorem~\ref{top-trans-k1}, the inequalities $\rho\ge 1$, $n\ge 2$ imply 
$$
n\le (\rho+1)(n-1)=\dim G(d, \rho+1),
$$
and 
$$
n = d-\rho < \cat G(d, \rho+1)
$$
by Theorem~\ref{grass-ls}.
\end{proof}

In fact, all the above theorems and theorems in Section~\ref{col-helly-semi-sec} can be generalized to families, where each color contains arbitrary number (not necessarily $\rho_i+k_i+1$) sets.

\begin{defn}
Let $\mathcal F$ be a family of subsets of $\mathbb R^d$. We say that $\mathcal F$ has \emph{property $T_m^n$}, if every subfamily $\mathcal G\subseteq \mathcal F$ of size $\le n$ has an $m$-transversal.
\end{defn}

Evidently, every family has property $T_m^{m+1}$, and $T_0^{d+1}$ implies $T_0^\infty$ (the Helly theorem). There are no Helly-type theorems, where $T_m^n$ implies $T_m^\infty$ for $m>0$ without additional assumptions, see~\cite{cgppsw1994}. Now we give an example, where Theorem~\ref{col-helly-trans2} is generalized.

\begin{thm}
\label{col-helly-trans3}
Let $\mathcal F$ be a family of compact convex sets in $\mathbb R^d$, painted with $d-m+1$ colors, so that every color is used at least once. Suppose that every heterochromatic subset of $\mathcal F$ is intersecting. Suppose also that the product
$$
\prod_{i=1}^{d-m+1}[\underbrace{0,\ldots,0}_{\rho_i}, \underbrace{k_i,\ldots,k_i}_{m-\rho_i}]
$$
is nonzero on $G(d, m)$.

Then there exists a color $i$ such that $\mathcal F_i$ has $T_{\rho_i}^{\rho_i+k_i+1}$ property.
\end{thm}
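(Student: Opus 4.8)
The plan is to derive Theorem~\ref{col-helly-trans3} from Theorem~\ref{col-helly-trans2} by extracting, for each color, a ``bad'' subfamily of the prescribed size. Suppose the conclusion fails, so that for every color $i$ the subfamily $\mathcal F_i$ lacks property $T_{\rho_i}^{\rho_i+k_i+1}$: there is a subfamily $\mathcal G_i\subseteq\mathcal F_i$ with $|\mathcal G_i|\le\rho_i+k_i+1$ that has no $\rho_i$-transversal. Such a $\mathcal G_i$ is nonempty, since the empty family trivially admits a $\rho_i$-transversal (any $\rho_i$-plane), so by repeating one of its members we may enlarge $\mathcal G_i$ to a sub-multiset of $\mathcal F_i$ of size exactly $\rho_i+k_i+1$; since a $\rho$-transversal of a multiset is nothing but a $\rho$-transversal of its underlying set, the enlarged $\mathcal G_i$ still has no $\rho_i$-transversal, and every member of it still belongs to $\mathcal F_i$.

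Now form the painted family $\mathcal G=\bigcup_{i=1}^{d-m+1}\mathcal G_i$, with color $i$ equal to $\mathcal G_i$, which has $\rho_i+k_i+1$ sets; every color is used. A heterochromatic subset of $\mathcal G$ picks at most one set per color, each lying in the corresponding $\mathcal F_i\subseteq\mathcal F$, so as a collection of convex sets it is a heterochromatic subfamily of $\mathcal F$, hence intersecting. Since the cohomological hypothesis is identical to that of Theorem~\ref{col-helly-trans2}, applying that theorem to $\mathcal G$ produces a color $i$ and a $\rho_i$-transversal plane meeting every member of $\mathcal G_i$, contradicting the construction. The rest of the argument runs exactly as the proof of Theorem~\ref{col-helly-trans}.

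The one point requiring care is that $\mathcal G$ is in general a multiset: the padding introduces repeated sets, and a color of $\mathcal F$ may itself contain fewer than $\rho_i+k_i+1$ distinct sets. The hard part is thus to be sure that Theorems~\ref{col-helly-trans2} and~\ref{top-trans} are insensitive to repetitions. This is indeed so: repeated sets merely force some of the sections $s_j$ of the tautological bundle in the proof of Theorem~\ref{top-trans} to coincide, which changes nothing in the characteristic-class computation; alternatively one perturbs the repeated copies by $\varepsilon$ and passes to the limit, using that $\{L:\dim\langle s_1(L),\ldots,s_l(L)\rangle\le r\}$ is a closed condition. Apart from this bookkeeping the proof is routine.
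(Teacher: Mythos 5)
Your proof is correct, but it takes a genuinely different route from the paper. You argue by contraposition and reduce to Theorem~\ref{col-helly-trans2}: pick one ``bad'' subfamily $\mathcal G_i\subseteq\mathcal F_i$ per color, pad each by repetitions to size exactly $\rho_i+k_i+1$, and apply Theorem~\ref{col-helly-trans2} to $\bigcup_i\mathcal G_i$ to get a contradiction; the quantifiers work out because the bad subfamilies are fixed before the theorem is invoked. The paper instead reruns the covering argument of Theorem~\ref{col-helly-trans} on the full family $\mathcal F$ (whose color classes may have arbitrary sizes), obtaining one color $i$ with $[\underbrace{0,\ldots,0}_{\rho_i},\underbrace{k_i,\ldots,k_i}_{m-\rho_i}]|_{\mathcal T_m(\mathcal F_i)}\neq 0$, and then uses the monotonicity $\mathcal T_m(\mathcal F_i)\subseteq\mathcal T_m(\mathcal G)$ to transfer this topological transversal to every small subfamily $\mathcal G$ at once, finishing with Theorem~\ref{top-trans}; this is exactly why the paper remarks that the result ``does not follow from Theorem~\ref{col-helly-trans2} directly'' --- the hypotheses on exact color sizes do not match, and a fixed color must serve all subfamilies simultaneously. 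Your contrapositive selection circumvents that obstacle legitimately, at the cost of having to check that Theorems~\ref{col-helly-trans2} and~\ref{top-trans} tolerate repeated sets; you address this correctly (coincident sections do not affect the characteristic-class argument), and note the paper itself relies on the same repetition trick when padding $\mathcal G$ before applying Theorem~\ref{top-trans}. What the paper's approach buys is the slightly stronger intermediate statement that the Schubert class is nonzero on $\mathcal T_m(\mathcal F_i)$ for the whole color class, and it avoids invoking Theorem~\ref{col-helly-trans2} (whose proof is itself only indicated by analogy) as a black box; what your approach buys is a shorter formal deduction from an already-stated theorem. One cosmetic point: your closing sentence ``the rest of the argument runs exactly as the proof of Theorem~\ref{col-helly-trans}'' is redundant, since the contradiction already completes the proof.
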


\begin{proof}
Similar to the above proofs, we conclude that there exists $i$ such that 
$$
[\underbrace{0,\ldots,0}_{\rho_i}, \underbrace{k_i,\ldots,k_i}_{m-\rho_i}]|_{\mathcal T_m(\mathcal F_i)}\not=0.
$$
Consider $\mathcal G\subseteq \mathcal F_i$ such that $|\mathcal G|\le \rho_i+k_i+1$. If $|\mathcal G|< \rho_i+k_i+1$, we repeat some element of $\mathcal G$ several times, and assume that $|\mathcal G|= \rho_i+k_i+1$. Now we see that 
$$
\mathcal T_m(\mathcal F_i)\subseteq \mathcal T_m(\mathcal G),
$$
and 
$$
[\underbrace{0,\ldots,0}_{\rho_i}, \underbrace{k_i,\ldots,k_i}_{m-\rho_i}]|_{\mathcal T_m(\mathcal G)}\not=0.
$$
Hence $\mathcal G$ has $\rho_i$-transversal by Theorem~\ref{top-trans}.
\end{proof}

Note that Theorem~\ref{col-helly-trans3} does not follow from Theorem~\ref{col-helly-trans2} directly. Theorem~\ref{col-helly-trans-ls} is generalized in the same manner, the only change in the proof is the following. By the Lusternik-Schnirelmann reasoning we find $i$ such that the inclusion 
$$
\mathcal T_m(\mathcal F_i)\subseteq M(d,m)
$$
is not null-homotopic. Then the inclusion 
$$
T_m(\mathcal G)\subseteq M(d,m)
$$
is not null-homotopic, because the composition of inclusions
$$
\mathcal T_m(\mathcal F_i)\subseteq \mathcal T_m(\mathcal G)\subseteq M(d,m)
$$ 
is not null-homotopic. Hence $\mathcal G$ has a $\rho$-transversal.

\section{Linear maps of simplicial complexes}
\label{linear-sec}

The transversal results of Section~\ref{col-helly-trans-sec} can be restated as existence of plane transversals to certain sets of faces for linear images of simplicial complexes in $\mathbb R^d$. Let us define such a complex. Denote $[n]=\{1,2,\ldots, n\}$

\begin{defn}
Let $\eta =(n_1,\ldots, n_l)$ be a vector of positive integers greater or equal to $l$. Let $L_\eta$ be the simplicial complex with vertices $[n_1]\times\dots\times [n_l]$, and the maximal simplices of the form
$$
[n_1]\times\dots\times[n_{i-1}]\times\{j\}\times[n_{i+1}]\times\dots\times[n_l],
$$
for every $i\in[l]$ and $j\in [n_i]$.
\end{defn}

\begin{thm}
For any linear map $f: L_\eta\to\mathbb R^d$ there exist $i\in [l]$ and a transversal plane of dimension $n_i-l$ to the images of the simplices 
$$
[n_1]\times\dots\times[n_{i-1}]\times\{j\}\times[n_{i+1}]\times\dots\times[n_l],\ j\in [n_i],
$$
under $f$, provided 
$$
\sum_{i=1}^l n_i \ge (l-1)(d+2)+1.
$$
\end{thm}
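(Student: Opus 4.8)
The plan is to recognize the statement as the geometric reformulation of Corollary~\ref{col-helly-trans-ineq}; the only genuinely new ingredient will be an elementary observation about the maximal faces of $L_\eta$, everything else being a translation.

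First I would fix the family. A linear map $f\colon L_\eta\to\mathbb R^d$ is affine on each simplex, so it carries every maximal simplex
$$
\sigma_i^{(j)}=[n_1]\times\dots\times[n_{i-1}]\times\{j\}\times[n_{i+1}]\times\dots\times[n_l]
$$
to a polytope $f(\sigma_i^{(j)})$, which is compact and convex. Let $\mathcal F$ consist of all these polytopes, painted with $l$ colors, color $i$ being $\{f(\sigma_i^{(j)}):j\in[n_i]\}$; thus color $i$ is used $n_i$ times. Set $k=l-1$ (so there are $k+1$ colors) and $\rho_i=n_i-l\ge 0$ (using $n_i\ge l$), so color $i$ has $\rho_i+k+1=n_i$ sets, matching the hypotheses of Corollary~\ref{col-helly-trans-ineq}. (If $n_i-l\ge d$ for some $i$, the conclusion for that color is trivial, taking all of $\mathbb R^d$ as a transversal; so we may assume $\rho_i<m:=d-k$ for every $i$.)

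The heart of the matter is that every heterochromatic subfamily of $\mathcal F$ is intersecting. Indeed, a heterochromatic subfamily picks, for each color $i$ in some $S\subseteq[l]$, an index $j_i\in[n_i]$ and the set $f(\sigma_i^{(j_i)})$. Extend $S$ to all of $[l]$ by choosing an arbitrary $j_i\in[n_i]$ for $i\notin S$, and consider the vertex $v=(j_1,\dots,j_l)\in[n_1]\times\dots\times[n_l]$ of $L_\eta$. Since the $i$-th coordinate of $v$ equals $j_i$, we have $v\in\sigma_i^{(j_i)}$ for every $i$, hence $f(v)\in f(\sigma_i^{(j_i)})$ for every $i$: the chosen sets share the common point $f(v)$.

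Finally I would check the cardinality hypothesis. One has $\sum_{i=1}^{k+1}\rho_i=\sum_{i=1}^l(n_i-l)=\big(\sum_{i}n_i\big)-l^2$ and $k(d-k)=(l-1)(d-l+1)$, and an immediate rearrangement gives
$$
\sum_{i=1}^{k+1}\rho_i\ge k(d-k)\quad\Longleftrightarrow\quad \sum_{i=1}^l n_i\ge (l-1)(d+2)+1,
$$
which is exactly the assumed inequality. Thus Corollary~\ref{col-helly-trans-ineq} applies and yields a color $i$ together with a $\rho_i$-transversal plane to $\mathcal F_i$, i.e.\ an affine plane of dimension $n_i-l$ meeting $f(\sigma_i^{(j)})$ for every $j\in[n_i]$, which is the desired conclusion. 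I do not expect a real obstacle here: all the topological difficulty is already absorbed into Corollary~\ref{col-helly-trans-ineq} (and thence Theorem~\ref{top-trans}); the only thing one must spot is the common-vertex observation above, and the rest is bookkeeping — plus, if one wants full precision, a word on the degenerate range $\rho_i\ge m$ discussed parenthetically above.
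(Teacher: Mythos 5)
Your argument is essentially identical to the paper's proof: the paper likewise takes $\mathcal F_i=\{f([n_1]\times\dots\times\{j\}\times\dots\times[n_l])\}_{j\in[n_i]}$, sets $k=l-1$ and $\rho_i=n_i-l$, observes that every heterochromatic intersection already contains the image of a vertex of $L_\eta$, and invokes Corollary~\ref{col-helly-trans-ineq}; your write-up merely makes the common-vertex argument and the arithmetic $\sum_i\rho_i\ge k(d-k)\Leftrightarrow\sum_i n_i\ge(l-1)(d+2)+1$ explicit. The only quibble is your parenthetical reduction: excluding $n_i-l\ge d$ only yields $\rho_i\le d-1$, not $\rho_i<m=d-k$, so the range $m\le\rho_i\le d-1$ is not actually disposed of there --- but this degenerate range is one the paper's statement and proof silently ignore as well, and it does not affect your main line of reasoning.
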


\begin{proof}
Denote by $\mathcal F_i$ the images of simplices 
$$
[n_1]\times\dots\times[n_{i-1}]\times\{j\}\times[n_{i+1}]\times\dots\times[n_l],\ j\in [n_i].
$$
Note that the conditions of Theorem~\ref{col-helly-trans-ineq} for $\mathcal F = \bigcup_{i=1}^l\mathcal F_i$ are satisfied, if we put $k=l-1$, $\rho_i=n_i-l=n_i-k-1$. The heterochromatic intersection condition is satisfied, because any heterochromatic intersection already contains a vertex of $L_\eta$ by definition.
\end{proof}

\section{A generalization of the colorful Helly theorem and its transversal analogues}
\label{col-helly-semi-sec}

We are going to generalize Theorem~\ref{col-helly-trans2} to the case, when the heterochromatic intersection condition is replaced by a weaker condition.

\begin{defn}
A family $\mathcal F$ of sets with $|\mathcal F|=k$, is \emph{semintersecting} if all except possibly one of its subsets of size $k-1$ are intersecting.
\end{defn}

For example, a family of three sets is semintersecting if one of them intersects the other two. We shall use the following generalization of the colorful Helly theorem, which is interesting itself.

\begin{lem}
\label{col-helly-semi}
Let $\mathcal F$ be a family of compact convex sets in $\mathbb R^d$ painted with $d+2$ colors. Suppose that
every heterochromatic subfamily of $\mathcal F$ of size $d+2$ is semintersecting. Then there is a color and a point in common to all members of $\mathcal F$ with this color.
\end{lem}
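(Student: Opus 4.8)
The plan is to reduce the lemma to the ordinary colorful Helly theorem of B\'ar\'any--Lov\'asz. Write $\mathcal F=\mathcal F_1\cup\cdots\cup\mathcal F_{d+2}$ for the colour classes. First I would pass to finite families: by Helly's theorem, if $\bigcap\mathcal F_i=\emptyset$ then already some subfamily of $\mathcal F_i$ of size $\le d+1$ has empty intersection, and the semintersecting hypothesis is inherited by every subcolouring; so it suffices to rule out the case that every $\mathcal F_i$ is finite and non-intersecting. Assume this for contradiction.

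The heart of the reduction is to locate a \emph{Helly-critical} rainbow tuple. Since each $\mathcal F_i$ is non-intersecting, applying the colorful Helly theorem in contrapositive form to the colours $\{1,\dots,d+1\}$ produces an empty rainbow tuple; among all empty rainbow tuples choose one, $\{A_i:i\in I\}$ with $A_i\in\mathcal F_i$, with $|I|$ as small as possible, so that every proper subfamily is intersecting and $|I|\ge 2$. Ordinary Helly in $\mathbb R^d$ rules out a Helly-critical rainbow $(d+2)$-tuple, so $|I|\le d+1$; and if $|I|\le d$ I could adjoin arbitrary representatives of two of the (at least two) unused colours to obtain a rainbow $(d+2)$-tuple containing two distinct empty $(d+1)$-subtuples, contradicting the semintersecting hypothesis. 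Hence $|I|=d+1$; relabel $I=\{1,\dots,d+1\}$ and set $F_j=\bigcap_{i\le d+1,\ i\ne j}A_i$. These are non-empty compact convex sets, pairwise disjoint (any two of them meet in $\bigcap_{i\le d+1}A_i=\emptyset$), with $A_j\cap F_j=\emptyset$ and $F_i\subseteq A_j$ for $i\ne j$.

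Now the last colour is controlled: for any $C\in\mathcal F_{d+2}$ the rainbow $(d+2)$-tuple $\{A_1,\dots,A_{d+1},C\}$ is semintersecting and already has $\{A_1,\dots,A_{d+1}\}$ as an empty $(d+1)$-subtuple, so this must be the only one, and hence $C$ meets $F_j$ for every $j$. Strictly separating the disjoint compact convex sets $A_j$ and $F_j$ by hyperplanes $h_1,\dots,h_{d+1}$, one obtains the picture in which $F_j$ lies strictly on one side of $h_j$ while all the other $F_i$ and all of $A_j$ lie strictly on the other side, and every member of $\mathcal F_{d+2}$ crosses all of these hyperplanes in the prescribed fashion. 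Recall also the classical fact that the Helly-criticality of $\{A_1,\dots,A_{d+1}\}$ forces any choice of witnesses $p_j\in F_j$ to be affinely independent, so $\mathrm{conv}(p_1,\dots,p_{d+1})$ is a full-dimensional simplex.

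The main obstacle is the last step: deriving a contradiction from this configuration, i.e. showing that the (assumed non-intersecting) family $\mathcal F_{d+2}$ must after all have a common point. The naive argument — each $C\in\mathcal F_{d+2}$ contains a simplex with one vertex in each $F_j$, and all such simplices ought to share a point — breaks down when the $F_j$ are large, so a genuine synthesis is needed. I would attempt to run the above symmetrically over all $d+2$ choices of the distinguished colour, using the separating hyperplanes together with the affine independence of the witnesses to force a common point; failing a clean direct argument one can instead set up an induction on $d$ (projecting transversally to $\mathrm{conv}(p_1,\dots,p_{d+1})$), or appeal to the ``very colorful'' machinery of \cite{abbfm2009}. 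This is where the real content of the lemma resides.
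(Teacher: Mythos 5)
Your set-up is correct and essentially reproduces the first half of the paper's argument: all heterochromatic $d$-tuples intersect, so (assuming no colour class is intersecting) one can extract a heterochromatic non-intersecting $(d+1)$-tuple $\{A_1,\ldots,A_{d+1}\}$ all of whose $d$-subfamilies intersect, and then the semintersecting hypothesis forces every set $C$ of the remaining colour to meet each $F_j=\bigcap_{i\neq j}A_i$; the affine independence of witnesses $p_j\in F_j$ via Radon is also right. But the proof stops exactly where the lemma actually lives: you explicitly leave open the step of producing a common point of the remaining colour class, and none of the three escape routes you sketch (symmetrizing over the distinguished colour, induction on $d$ by projection, or citing \cite{abbfm2009}) is carried out or obviously workable. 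As written this is a genuine gap, not a proof.

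The idea you are missing is topological, and it in fact rescues the ``naive argument'' you dismiss. Since $\{A_1,\ldots,A_{d+1}\}$ is non-intersecting while every $d$ of the sets intersect, the nerve of the family is the boundary of a $d$-simplex, so by the Leray theorem $\bigcup_j A_j$ has the \v Cech homology of $S^{d-1}$, and by Alexander duality $\mathbb R^d\setminus\bigcup_j A_j$ has exactly one bounded component; fix a point $v_0$ in it \emph{before} looking at the last colour. Now for any $C$ of the remaining colour choose $a_j\in F_j\cap C$; these points are affinely independent, so $\Delta=\conv\{a_1,\ldots,a_{d+1}\}$ is a $d$-simplex contained in $C$, and each facet of $\Delta$ (spanned by the $a_i$ with $i\neq j$) lies in $A_j$, so $\partial\Delta\subset\bigcup_j A_j$. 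Repeating the duality argument for the sets $C_j=A_j\cap\Delta$ one sees that the bounded component of $\mathbb R^d\setminus\bigcup_j A_j$ is contained in the interior of $\Delta$, hence $v_0\in\Delta\subseteq C$. Thus all such simplices \emph{do} share a point, regardless of how large the $F_j$ are, because the common point is located in the complement of $\bigcup_j A_j$ independently of $C$; this single observation is what closes the argument, and without it (or a worked-out substitute) your proposal does not prove the lemma.
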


\begin{proof}
First of all, it is clear that every heterochromatic subset of $\mathcal F$ of size $d$ is intersecting.
Furthermore, if every heterochromatic subset of $\mathcal F$ of size $d+1$ is intersecting, we are done by the colorful Helly theorem. Thus, there must be $\{A_0,\ldots,A_d\}\subset\mathcal F$, which is a heterochromatic non-intersecting subfamily with the property that for every $i=0,...,d,$ 
$$
\bigcap_{j\neq i}A_j\neq \emptyset.
$$ 
It follows from the Leray theorem on \v{C}ech cohomology that $\cup_{j=0}^d A_j$ has the homology of the sphere $S^{d-1}$. Hence, by the Alexander duality, $\mathbb R^d\setminus\cup_{j=0}^d A_j$ has exactly two components, one of them being bounded. 

Let $v_0$ be any point of the bounded component of $\mathbb R^d\setminus\cup_{j=0}^d A_j$. Remember that there is a color not used in $\{A_0,\ldots,A_d\}$ so we shall prove that $v_0$ lies in every convex set $X\in\mathcal{F}$ with this color. For every $i=0,\ldots,d$ take 
$$
a_i\in \bigcap_{j\neq i}A_j\cap X
$$ 
Note that $\{a_0,\ldots,a_d\}\subset \mathbb R^d$ is in general position, otherwise by Radon's theorem, $\cap _{j=0}^d A_{j}\neq \emptyset$. Let the simplex $\Delta$ be the convex hull of $\{a_0,\ldots,a_d\}$ and note that 
$$
\partial\Delta \subset \cup_{j=0}^d A_j.
$$ 
For every $i=0,\ldots,d,$ let $C_i=A_i\cap \Delta$. Hence, for every $i=1,\ldots,d,$, 
$$
\bigcap_{j\neq i}C_{j}\neq \emptyset\quad\text{but}\quad \bigcap_{j=0}^d C_j=\emptyset.
$$
Similarly to the case of $A_i$'s, $\cup_{j=0}^d C_j\subset \mathbb R^d$ has the homology of $S^{d-1}$, and therefore $\mathbb R^d\setminus\cup_{j=0}^d C_j$ has exactly two components. Thus, the bounded component of $\mathbb R^d\setminus\cup_{j=0}^d A_j$ is $\Delta\setminus\cup_{j=0}^d A_j = \Delta\setminus\cup_{j=0}^d C_j$, and it is contained in the interior of the simplex $\Delta$. In particular, $v_0\in X$.
\end{proof}

The following theorem is deduced from Lemma~\ref{col-helly-semi} in the same way, as Theorem~\ref{col-helly-trans2} is deduced from the colorful Helly theorem.

\begin{thm}
\label{col-helly-semi-trans}
Let $\mathcal F$ be a family of compact convex sets in $\mathbb R^d$ painted with $d-m+2$ colors so that color $i$ has $\rho_i+k_i+1$ convex sets. Suppose that every heterochromatic subset of $\mathcal F$ of size $d-m+2$ is semintersecting. Suppose also that the product
$$
\prod_{i=1}^{d-m+2}[\underbrace{0,\ldots,0}_{\rho_i}, \underbrace{k_i,\ldots,k_i}_{m-\rho_i}]
$$
is nonzero on $G(d, m)$.

Then there is a color $i$ and a $\rho_i$-transversal plane to all convex sets of $\mathcal F$ painted with this color.
\end{thm}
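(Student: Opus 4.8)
The plan is to mimic exactly the derivation of Theorem~\ref{col-helly-trans2} from the colorful Helly theorem, replacing the use of that theorem by Lemma~\ref{col-helly-semi}, which requires $d-m+2$ colors rather than $d-m+1$. First I would fix a linear $m$-subspace $L\subseteq\mathbb R^d$ and project the whole family $\mathcal F$ orthogonally onto $L^\perp$, which is a copy of $\mathbb R^{d-m}$. The projections $\{\,\mathrm{proj}_{L^\perp}(X) : X\in\mathcal F\,\}$ are compact convex sets in $\mathbb R^{d-m}$, painted with $d-m+2 = (d-m)+2$ colors, and any heterochromatic subfamily of size $d-m+2$ is semintersecting because projection of an intersecting family is intersecting (and a subfamily all of whose $(k-1)$-subsets but one are intersecting keeps that property under projection). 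So Lemma~\ref{col-helly-semi} applies in $\mathbb R^{d-m}$: there is a color $i=i(L)$ and a point $x\in L^\perp$ lying in $\mathrm{proj}_{L^\perp}(X)$ for every $X\in\mathcal F_i$. That means the affine plane $L+x$ is an $m$-transversal to $\mathcal F_i$, so we may paint $L$ with color $i(L)$.

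Next I would observe that this produces a covering $G(d,m)=X_1\cup\dots\cup X_{d-m+2}$, where $X_i$ is the set of linear $m$-planes painted with color $i$; equivalently, $X_i$ is the image under the projection $\mathcal T_m(\mathcal F_i)\to G(d,m)$ (forgetting the translation part) of the space of affine $m$-transversals to $\mathcal F_i$. This projection has convex point-preimages (the set of valid translations $x$ for a fixed direction $L$ is convex), hence induces an isomorphism on \v Cech cohomology, so $X_i$ and $\mathcal T_m(\mathcal F_i)$ have the same behaviour with respect to the Schubert classes. Now I would apply Lemma~\ref{cup-prod} to the covering by the $X_i$'s and the product $\prod_{i=1}^{d-m+2}[\underbrace{0,\ldots,0}_{\rho_i},\underbrace{k_i,\ldots,k_i}_{m-\rho_i}]$, which is nonzero on $G(d,m)$ by hypothesis: some factor $[\underbrace{0,\ldots,0}_{\rho_i},\underbrace{k_i,\ldots,k_i}_{m-\rho_i}]$ is nonzero on the corresponding $X_i$, and therefore nonzero on $\mathcal T_m(\mathcal F_i)$.

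Finally, for that color $i$ the family $\mathcal F_i$ consists of $\rho_i+k_i+1$ compact convex sets in $\mathbb R^d$ and has a topological $\rho_i$-transversal of index $(m,k_i)$ (this is exactly the statement that the Schubert cocycle $[\underbrace{0,\ldots,0}_{\rho_i+1},k_i,\ldots,k_i]$ is nonzero on $\mathcal T_m(\mathcal F_i)$ — here I should double-check that the index convention lines up, i.e. that nonvanishing of $[\underbrace{0,\ldots,0}_{\rho_i},\underbrace{k_i,\ldots,k_i}_{m-\rho_i}]$ on $\mathcal T_m(\mathcal F_i)$ is what feeds into Theorem~\ref{top-trans}, as in the proof of Theorem~\ref{col-helly-trans}). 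Then Theorem~\ref{top-trans} upgrades this to an ordinary $\rho_i$-transversal to all sets of $\mathcal F$ of color $i$, which is the desired conclusion.

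The only genuinely new ingredient compared with Theorem~\ref{col-helly-trans2} is the semintersecting hypothesis, and the main point to get right is that it is preserved by orthogonal projection onto $L^\perp$ so that Lemma~\ref{col-helly-semi} is legitimately applicable in dimension $d-m$; everything downstream of obtaining the $(d-m+2)$-colored covering of $G(d,m)$ is literally the argument already used for Theorems~\ref{col-helly-trans} and \ref{col-helly-trans2}. I expect that projection step — and the bookkeeping that the colorful-Helly-with-semintersecting lemma indeed needs exactly $(d-m)+2$ colors, matching the $d-m+2$ colors assumed here — to be the part requiring the most care, though it is routine.
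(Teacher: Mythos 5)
Your proposal is correct and is exactly the paper's argument: the paper proves this theorem by repeating the proof of Theorem~\ref{col-helly-trans}/\ref{col-helly-trans2} verbatim with Lemma~\ref{col-helly-semi} (which indeed needs $(d-m)+2$ colors, and whose semintersecting hypothesis survives orthogonal projection to $L^\perp$) replacing the colorful Helly theorem, then covering $G(d,m)$ by the color sets $X_i$, applying Lemma~\ref{cup-prod}, passing to $\mathcal T_m(\mathcal F_i)$ via the convex-fiber projection, and finishing with Theorem~\ref{top-trans}.
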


The condition of the nonzero product in the cohomology can be simplified, e.g. in the case $\rho_i=m-1$, $k_i=1$, using Theorem~\ref{sw1-height}. A particular case of this theorem is the following claim: if $\mathcal F$ is a family of $4$ compact, convex, red sets; $4$ compact, convex, blue sets; and $4$ compact, convex, green sets in $\mathbb R^4$, such that every heterochromatic triple is semintersecting, then there is a color and a $2$-plane transversal to all convex sets of this color. Here $d=4, m=3,\rho_i=2,k_i=1$ and we use the equality
$$
[0,0,1]^3=[1,1,1]\not=0\in H^*(G(4,3)).
$$

Similar to Corollary~\ref{col-helly-trans-ineq}, we deduce the following corollary from Theorem~\ref{col-helly-semi-trans} and the Pieri formula
$$
[\underbrace{0,\ldots,0}_{\rho}, \underbrace{d-m,\ldots, d-m}_{m-\rho}] = [0,\ldots,0, d-m]^{m-\rho}
$$
in $H^*(G(d,m))$.

\begin{cor}
Let $\mathcal F$ be a family of compact convex sets in $\mathbb R^d$ painted with $k+2$ colors so that color $i$ has $\rho_i+k+1$ convex sets. Suppose that every heterochromatic subset of $\mathcal F$ of size $k+2$ is semintersecting. Suppose also that
$$
\sum_{i=1}^{k+2} \rho_i \ge (d-k)(k+1),
$$
or equivalently
$$
|\mathcal F|\ge (d+2)(k+1)
$$
Then there is a color $i$ and a $\rho_i$-transversal plane to all convex sets of $\mathcal F$ painted with this color.
\end{cor}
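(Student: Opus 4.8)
The plan is to deduce this straight from Theorem~\ref{col-helly-semi-trans} by specializing $m = d-k$. With this choice the number of colors $d-m+2$ equals $k+2$ as in the statement, each $k_i$ equals $k = d-m$, and the hypothesis that every heterochromatic subfamily of size $k+2 = d-m+2$ be semintersecting is precisely the hypothesis of Theorem~\ref{col-helly-semi-trans}. Consequently the only real content of the corollary is to translate the cohomological nonvanishing condition of that theorem into the arithmetic inequality stated here; this is exactly the passage that runs parallel to the proof of Corollary~\ref{col-helly-trans-ineq}, with $d-m+1$ replaced by $d-m+2$.

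For the translation I would use the Pieri formula from Section~\ref{mult-sec}, in the form displayed just before the corollary: for each color,
$$
[\underbrace{0,\ldots,0}_{\rho_i}, \underbrace{d-m,\ldots,d-m}_{m-\rho_i}] = [0,\ldots,0,d-m]^{\,m-\rho_i} = \bar w_{d-m}^{\,m-\rho_i}.
$$
Multiplying over all $k+2$ colors gives
$$
\prod_{i=1}^{k+2}[\underbrace{0,\ldots,0}_{\rho_i}, \underbrace{d-m,\ldots,d-m}_{m-\rho_i}] = \bar w_{d-m}^{\;\sum_{i=1}^{k+2}(m-\rho_i)}.
$$
Since a short induction with the Pieri formula shows $\bar w_{d-m}^{\,j} = [\underbrace{0,\ldots,0}_{m-j},\underbrace{d-m,\ldots,d-m}_{j}]$ is a basis element of $H^*(G(d,m))$ for $0 \le j \le m$ while $\bar w_{d-m}^{\,m+1} = 0$ (its degree $(m+1)(d-m)$ exceeds $\dim G(d,m) = m(d-m)$), the product above is nonzero if and only if $\sum_{i=1}^{k+2}(m-\rho_i) \le m$, i.e. $(k+1)m \le \sum_{i=1}^{k+2}\rho_i$. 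As $m = d-k$, this is exactly $\sum_{i=1}^{k+2}\rho_i \ge (d-k)(k+1)$.

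Finally I would record the equivalence with the bound on $|\mathcal F|$: because $|\mathcal F| = \sum_{i=1}^{k+2}(\rho_i+k+1) = \sum_{i=1}^{k+2}\rho_i + (k+2)(k+1)$, the inequality $\sum_{i=1}^{k+2}\rho_i \ge (d-k)(k+1)$ holds if and only if $|\mathcal F| \ge (d-k)(k+1)+(k+2)(k+1) = (d+2)(k+1)$. Then Theorem~\ref{col-helly-semi-trans} applies and produces the color $i$ together with the $\rho_i$-transversal plane. I do not anticipate any genuine obstacle; the only point requiring mild care is the index bookkeeping in the Pieri computation and the harmless degenerate colors with $m-\rho_i = 0$, which simply contribute the unit factor $1$ to the product and do not affect the nonvanishing.
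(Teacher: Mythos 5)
Your argument is correct and is essentially the paper's own proof: the paper likewise deduces this corollary by specializing Theorem~\ref{col-helly-semi-trans} to $m=d-k$ (so $d-m+2=k+2$, $k_i=k=d-m$) and using the Pieri formula $[\underbrace{0,\ldots,0}_{\rho_i},\underbrace{d-m,\ldots,d-m}_{m-\rho_i}]=[0,\ldots,0,d-m]^{m-\rho_i}$, exactly as in Corollary~\ref{col-helly-trans-ineq}. Your bookkeeping (nonvanishing iff $\sum_i(m-\rho_i)\le m$, and the translation to $|\mathcal F|\ge (d+2)(k+1)$) matches the intended computation.
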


The following theorem is an analogue of Theorem~\ref{col-helly-trans-ls} for semintersecting families.

\begin{thm}
\label{col-helly-semi-ls}
Let $\mathcal F$ be a family of $n(\rho+2)$ compact, convex sets in $\mathbb R^{n+\rho-1}$ painted with $n$ colors, in which we have $\rho+2$ convex sets of each color, $\rho\ge 2$, $n\ge 3$. Suppose that every heterochromatic subset of $\mathcal F$ of size $n$ is semintersecting. Then there is a color and a $\rho$-transversal plane to all convex sets of $\mathcal F$ painted with this color.
\end{thm}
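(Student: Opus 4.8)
The plan is to repeat the argument of Theorem~\ref{col-helly-trans-ls} verbatim, using the semintersecting strengthening of the colorful Helly theorem (Lemma~\ref{col-helly-semi}) in place of the colorful Helly theorem itself, and then to compensate, in the Lusternik--Schnirelmann step, for the fact that the ambient dimension has dropped by one. Set $d=n+\rho-1$ and $m=\rho+1$, so that $m-\rho=1$ and the number of colors equals $d-m+2=n$. Assume, for contradiction, that no color of $\mathcal F$ admits a $\rho$-transversal. Given a linear $m$-subspace $L\in G(d,m)$, project $\mathcal F$ orthogonally onto $L^{\perp}\cong\mathbb R^{d-m}$; since orthogonal projection takes an intersecting family to an intersecting one, it takes a semintersecting family to a semintersecting one, so the projected family, painted with the same $n=(d-m)+2$ colors, satisfies the hypothesis of Lemma~\ref{col-helly-semi} in $\mathbb R^{d-m}$. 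Hence there are a color $i$ and a point $x\in L^{\perp}$ lying in every projected set of color $i$, which means that $L+x$ is an $m$-transversal to $\mathcal F_i$. Colouring each $L$ by such an $i$ produces a covering $G(d,m)=\bigcup_{i=1}^{n}X_i$, where $X_i$ denotes the set of directions of $m$-transversals to $\mathcal F_i$.

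Each $\mathcal F_i$ has $\rho+2=\rho+k+1$ convex sets with $k=1$ and, by assumption, no $\rho$-transversal, so Theorem~\ref{top-trans-k1} applies with $m=\rho+1$ and shows that $\mathcal T_m(\mathcal F_i)$ is contractible. As in the proof of Theorem~\ref{col-helly-trans-ls}, the forgetful map $\mathcal T_m(\mathcal F_i)\to X_i$ has convex point-preimages, hence admits a section, so the inclusion $X_i\hookrightarrow G(d,m)$ factors up to homotopy through the contractible space $\mathcal T_m(\mathcal F_i)$ and is null-homotopic. Therefore $G(d,m)$ is covered by $n$ sets that are null-homotopic in it, and consequently $\cat G(d,\rho+1)\le n$.

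It remains to contradict this by showing $\cat G(d,\rho+1)>n$. Since $d=n+\rho-1$ we have $d-m+2=n$, so the unconditional estimate of Theorem~\ref{grass-ls} yields only $\cat\ge n$; we therefore must use its sharper form. Because $\rho\ge2$ we have $m=\rho+1\ge3$, so the restrictions ``$m\ne1$'' and ``$m\ne2$'' in Theorem~\ref{grass-ls} hold. If $2m\le d$, i.e. $n\ge\rho+3$, then
$$
\cat G(d,m)\ge\min\{d-m+3,\ m(d-m)+1\}=\min\{n+1,\ (\rho+1)(n-2)+1\}\ge n+1,
$$
the second entry being $\ge n+1$ since $(\rho+1)(n-2)\ge 3(n-2)\ge n$ for $\rho\ge2$, $n\ge3$. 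If $2m>d$, i.e. $n\le\rho+2$, pass to $G(d,d-m)\cong G(d,m)$ with $m'=d-m=n-2\ge1$; the unconditional estimate gives
$$
\cat G(d,m')\ge\min\{d-m'+2,\ m'(d-m')+1\}=\min\{\rho+3,\ (n-2)(\rho+1)+1\}\ge n+1,
$$
since $\rho+3\ge n+1$ in this range and again $(n-2)(\rho+1)\ge n$. In either case $\cat G(d,\rho+1)\ge n+1$, contradicting $\cat G(d,\rho+1)\le n$, and the theorem follows.

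The conceptual part of the argument is routine, being parallel to Theorem~\ref{col-helly-trans-ls}. The genuinely delicate point is the last paragraph: relative to Theorem~\ref{col-helly-trans-ls} we have traded one unit of ambient dimension for the passage from ``intersecting'' to ``semintersecting'' heterochromatic subfamilies, and this is exactly what pushes us to the borderline case of the Grassmannian category estimate, where the plain bound $\cat G(d,m)\ge d-m+2$ is one short and the refined bounds of Theorem~\ref{grass-ls} (and, with them, the hypotheses $\rho\ge2$ and $n\ge3$) become indispensable.
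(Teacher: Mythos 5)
Your proof is correct and follows essentially the same route as the paper's: cover $G(n+\rho-1,\rho+1)$ by the sets $X_i$ via Lemma~\ref{col-helly-semi}, observe they are null-homotopic by Theorem~\ref{top-trans-k1} since no $\mathcal F_i$ has a $\rho$-transversal, and contradict the Lusternik--Schnirelmann bound of Theorem~\ref{grass-ls}. Your two-case check that $\cat G(n+\rho-1,\rho+1)\ge n+1$ is if anything slightly more careful than the paper's own (which in the case $n<\rho+3$ asserts the bound $\rho+3$, overstated when $n=3$, though the inequality $n<\cat$ that the argument actually needs still holds).
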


A particular case of this theorem is ($n=3$, $\rho=2$): If $\mathcal F$ is a family of $4$ compact, convex, red sets; $4$ compact, convex, blue sets, and $4$ compact, convex, green sets in $\mathbb R^4$ such that every heterochromatic triple is semintersecting, then there is a color and a $2$-plane transversal to all convex sets of this color.

\begin{proof}[Proof of Theorem~\ref{col-helly-semi-ls}]
The proof of is essentially the proof of Theorem~\ref{col-helly-trans-ls}, but using Theorem~\ref{col-helly-semi} instead of the colorful Helly theorem. 

We consider the Grassmannian $G(n+\rho-1, \rho+1)$ and cover it with the sets $X_i$, corresponding to existence of $\rho+1$-transversals in given direction for $\mathcal F_i$. If there is no $\rho$-transversal for any $\mathcal F_i$, then all the sets $X_i$ are null-homotopic by Theorem~\ref{top-trans-k1}. The inequalities $\rho\ge 2$, $n\ge 3$ imply
$$
n\le (\rho+1)(n-2)=\dim G(n+\rho-1, \rho+1).
$$
If $n-2<\rho+1$ (equivalently $n<\rho+3$), then Theorem~\ref{grass-ls} gives 
$$
\cat G(n+\rho-1, \rho+1)\ge \rho+3
$$
and $n<\cat G(n+\rho-1, \rho+1)$, which is a contradiction. If $n-2\ge \rho+1$, then Theorem~\ref{grass-ls} (its case $3\le m\le \frac{d}{2}$, where $m=\rho+1$) gives
$$
\cat G(n+\rho-1, \rho+1)\ge n+1,
$$
which is a contradiction too.
\end{proof}

\section{The case of $\mathbb C^d$}
\label{complex-sec}

Most of the previous results remain the same if we replace $\mathbb R^d$ by $\mathbb C^d$, and consider the complex Grassmannian $\mathbb CG(d, m)$, spaces $\mathbb CM(d, m)$, $\mathbb C\mathcal T_m(\mathcal F)$, defined in the corresponding manner. The Schubert calculus is valid too, but with integer coefficients ($\mathbb Z$), so we assume integer coefficients in the cohomology in this section. 

The important thing is that the Pieri formula also holds in the complex case, all the coefficients being positive. This fact guarantees a nonzero product much frequently, compared to the $\mathbb R^d$ case. Let us state the corresponding colorful-Helly-type result.

\begin{thm}
\label{col-helly-trans-C}
Let $\mathcal F$ be a family of compact convex sets in $\mathbb C^d$, painted with $2d-2m+1$ colors so that color $i$ has $\rho_i+k_i+1$ sets. Suppose that every heterochromatic subset of $\mathcal F$ is intersecting. Suppose also that the product
$$
\prod_{i=1}^{2d-2m+1}[\underbrace{0,\ldots,0}_{\rho_i}, \underbrace{k_i,\ldots,k_i}_{m-\rho_i}]
$$
is nonzero on $\mathbb CG(d, m)$.

Then there exists a color $i$ and a complex $\rho_i$-transversal plane to all convex sets of $\mathcal F$ painted with this color.
\end{thm}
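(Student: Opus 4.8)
The plan is to run the proof of Theorem~\ref{col-helly-trans2} in the complex setting, the one new ingredient being the complex analogue of Theorem~\ref{top-trans}. First I would fix a complex linear $m$-subspace $L\subseteq\mathbb C^d$ and let $L^\perp$ be its complex-orthogonal complement, which as a real vector space has dimension $2(d-m)$. Projecting $\mathcal F$ orthogonally onto $L^\perp\cong\mathbb R^{2(d-m)}$ sends each member to a compact convex set, and since every heterochromatic subfamily of $\mathcal F$ is intersecting, so is every heterochromatic subfamily of the projected family. The real colorful Helly theorem in dimension $2(d-m)$ — which takes exactly $2(d-m)+1=2d-2m+1$ colors, matching the hypothesis — then produces a color $i=i(L)$ and a point $x\in L^\perp$ lying in the projection of every set of $\mathcal F_i$. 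Hence the complex affine $m$-plane $L+x$ meets every member of $\mathcal F_i$, i.e. it is a complex $m$-transversal to $\mathcal F_i$; paint $L$ with color $i$. In this way the compact Grassmannian $\mathbb CG(d,m)$ is covered by sets $X_1,\dots,X_{2d-2m+1}$, where $X_i$ consists of the directions of complex $m$-transversals to $\mathcal F_i$.

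Next I would apply Lemma~\ref{cup-prod} to the product $\prod_{i=1}^{2d-2m+1}[0,\dots,0,k_i,\dots,k_i]$ (with $\rho_i$ zeros in the $i$-th factor), which by hypothesis is nonzero in $H^*(\mathbb CG(d,m);\mathbb Z)$, to obtain an index $i$ for which the $i$-th factor is nonzero on $X_i$. The natural projection $\mathbb C\mathcal T_m(\mathcal F_i)\to X_i$ has over each direction $L$ the fiber $\bigcap_{C\in\mathcal F_i}\pi_{L^\perp}(C)$, an intersection of convex sets, hence acyclic, so it induces an isomorphism of \v Cech cohomology; this identifies the previous statement with the assertion that $\mathcal F_i$, a family of $\rho_i+k_i+1$ compact convex sets in $\mathbb C^d$, has a topological complex $\rho_i$-transversal of index $(m,k_i)$.

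It remains to invoke the complex analogue of Theorem~\ref{top-trans} to upgrade this to an ordinary complex $\rho_i$-transversal to $\mathcal F_i$, and this is where the real work sits. One repeats the construction of the characteristic class $c_{l,r}$ from the proof of Theorem~\ref{top-trans} over a complex vector bundle: the rank-$\le r$ locus in the space of $n\times l$ complex matrices is a complex algebraic subvariety whose singular part has real codimension $\ge 2$ inside it, and complex Schubert varieties carry canonical fundamental classes, so the Poincar\'e dual class $c_{l,r}$ is well defined with $\mathbb Z$ coefficients, is functorial, and by the same computation over $\mathbb CG(N,n)$ as in the real case equals the complex Schubert cocycle $[0,\dots,0,l-r,\dots,l-r]$ (with $r$ zeros). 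Taking $l=\rho_i+k_i+1$, $r=\rho_i+1$, and continuous selections $s_j(L')\in L'\cap C_j$ over $\mathbb C\mathcal T_m(\mathcal F_i)$ (obtained by first $\varepsilon$-approximating the $C_j$ by strictly convex sets with nonempty interior, then passing to the limit and using compactness), the nonvanishing of $c_{l,r}(\gamma|_{\mathbb C\mathcal T_m(\mathcal F_i)})$ forces a complex $m$-transversal $L'$ on which the vectors $s_j(L')$ span a complex subspace of dimension $\le\rho_i+1$, that is, a complex affine $\rho_i$-plane meeting every member of $\mathcal F_i$. The main obstacle is thus a careful bookkeeping of coefficients: every excess-dimension and transversality argument that was carried out mod $2$ in the proof of Theorem~\ref{top-trans} must now be checked over $\mathbb Z$, which works precisely because complex rank loci have even real codimension and complex Schubert cells are canonically oriented; the positivity of the coefficients in the complex Pieri formula is what makes the hypothesis of a nonzero product hold in many more cases than over $\mathbb R$.
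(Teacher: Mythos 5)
Your argument is correct and is essentially the paper's own route: the paper proves the complex case only by remarking that the arguments of Theorems~\ref{top-trans} and \ref{col-helly-trans2} carry over verbatim with integer coefficients, and your write-up is exactly that transfer --- projecting onto $L^\perp\cong\mathbb R^{2(d-m)}$ to use the real colorful Helly theorem with $2d-2m+1$ colors, covering $\mathbb CG(d,m)$ and applying Lemma~\ref{cup-prod}, and rerunning the $c_{l,r}$ (Thom--Porteous) construction over $\mathbb Z$, which works because complex rank loci have even real codimension and canonically oriented singular strata. No gaps beyond the level of detail the paper itself leaves implicit.
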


From the Pieri formula it follows that in the case, when for all $i$ either $k_i=1$, or $k_i=d-m$, or $\rho_i=m-1$, or $\rho_i=0$, the cohomology product is nonzero iff its total dimension is $\leq m(d-m)$, or equivalently
$$
\sum_{i = 1}^{2d-2m+1} k_i(m-\rho_i) \le m(d-m).
$$

\end{document}